  \newtheorem{assumption}{Assumption}
\newcolumntype{P}[1]{>{\raggedleft\arraybackslash}p{#1}}
\newcommand{\genref}[2]{#2\,\ref{#1}}
\newcommand{\lemref}[1]{\genref{#1}{Lemma}}
\newcommand{\secref}[1]{Section\,\ref{#1}}
\numberwithin{theorem}{section}
\numberwithin{proposition}{section}
\numberwithin{lemma}{section}
\numberwithin{remark}{section}
\numberwithin{corollary}{section}
\numberwithin{assume}{section}
\numberwithin{equation}{section}
    \newcommand{\keywords}[1]{\par\addvspace\baselineskip  
     \noindent\keywordname\enspace\ignorespaces#1}
\newcommand{\p}[1]{\hat{#1}}
\newcommand{\iSet}{{\mathcal{I}}}
\newcommand{\sMP}[1][]{\ifthenelse{\isempty{#1}}{^{(k)}}{^{(#1)}}}
\newcommand{\g}[1]{#1}
 \newcommand{\MatTwo}[4]
  {\begin{bmatrix}
    #1 & #2\\
    #3 & #4
  \end{bmatrix}}
  \newcommand{\VecTwo}[2]
  {\begin{bmatrix}
    #1 \\
    #2 
  \end{bmatrix}}
\begin{document}
\mainmatter

\title{Robust Preconditioning for Space-Time Isogeometric Analysis of Parabolic Evolution Problems}
\titlerunning{Robust Preconditioning for Space-Time IGA of Parabolic Problems.}

\author{
Christoph Hofer$^1$
\and     Ulrich Langer$^{1,2,3}$ 
\and  Martin Neum\"uller$^{2}$
}
\authorrunning{C. Hofer,  U. Langer, M. Neum\"uller }

\institute{ 
$1$ Doctoral Program ``Computational Mathematics''\\ Johannes Kepler University,\\[1ex]
$2$ Institute of Computational Mathematics,\\ Johannes Kepler University,\\[1ex]
	    $3$ Johann Radon Institute for Computational and Applied Mathematics,\\
				Austrian Academy of Sciences\\[1ex]
				Altenbergerstr. 69, A-4040 Linz, Austria,\\[1ex]
christoph.hofer@dk-compmath.jku.at\\
ulanger@numa.uni-linz.ac.at\\
neumueller@numa.uni-linz.ac.at\\
 }

\noindent
\maketitle
\pagestyle{myheadings}
\thispagestyle{plain}

\begin{abstract}
 We propose and investigate  new
robust preconditioners for space-time Isogeometric Analysis of parabolic evolution problems. 
These preconditioners are based
on a time parallel multigrid method. 
We consider a decomposition of 
the space-time cylinder
into time-slabs
 which are coupled via a discontinuous Galerkin technique.
 The time-slabs provide the structure for the time-parallel 
multigrid solver.
The most important part of the  multigrid method is the smoother. We utilize the special structure of the involved operator to decouple its application into several spatial problems by means of generalized eigenvalue or Schur decompositions. 
Some of these problems have a symmetric saddle point structure, for which we present robust preconditions. Finally, we present numerical experiments confirming 
 the robustness of our space-time IgA solver.
\end{abstract}
\keywords{
parabolic evolution problems, isogeometric analysis, discontinuous Galerkin, robust preconditioners,  parallelization
}
\section{Introduction}
\label{sec:introduction}
 Time-dependent Partial Differential Equations (PDEs) 
of parabolic type
play an important role in the simulation of various physical processes, 
like 
 heat conduction, diffusion, and 
2d eddy-current problems in electromagnetics.
They are often 
given as initial-boundary value problems (IBVP).
The discretization of such problems is usually 
performed 
either
by first discretizing in time by a time-stepping method and then in space by, e.g., finite elements or vice versa. The former method is often denoted as Rothe's method  \cite{HLN:Lang:2000a} and the latter one vertical method of lines \cite{HLN:Thomme:2006a}. Both of the two approaches are sequential in time. In order to treat such problems on massively parallel computers, different approaches are required to overcome the sequential structure. 
There exist 
 various techniques
for parallelization in time. 
We refer to \cite{HLN:Gander:2015a} for an overview of time-parallel methods.

In the current work, we focus on space-time methods.
More precisely, we consider the
time  
as just
another variable, say $x_{d+1}$, where $x_1,\ldots,x_d$ are the $d$-dimensional spatial variables. The derivative in time direction is then viewed as a strong convection term in the direction $x_{d+1}$. In order to provide a stable discretization, we use stabilization techniques developed for convection dominated elliptic convection-diffusion problems, see, e.g., \cite{HLN:Stynes:2005a}. To be more precise, we consider the Streamline-Upwind Petrov-Galerkin (SUPG) method, introduced in \cite{HLN:HughesBrooks:1982a}.
We consider the linear parabolic IBVP,
find $u : \overline{Q} \rightarrow \mathbb{R}$ such that
\begin{equation}
\label{HLN:Eqn:ParabolicIBVP-CF}
\partial_{t} u - \Delta u = f \; \mbox{in}\; Q, \; u=0 \; \mbox{on}\; \Sigma, \; \mbox{and}\; 
u=u_0 \; \mbox{on}\; \overline{\Sigma}_0,
\end{equation}
as a typical parabolic model problem posed 
in the space-time cylinder 
$ \overline{Q} = \overline{\Omega} \times \overline{J}
= \overline{\Omega} \times [0,T] 
= Q \cup \Sigma \cup \overline{\Sigma}_0 \cup \overline{\Sigma}_T$, 
where $\partial_{t}$ denotes the partial time derivative, $\Delta$ 
is the Laplace operator, $f$ is a given source function, $u_0$ are 
the given initial data, 
$T$ is the final time, 
$J= (0,T)$ is the time interval,
$Q = \Omega \times (0,T)$, $\Sigma = \partial \Omega \times (0,T)$,
$\Sigma_0 := \Omega \times \{0\}$, $\Sigma_T := \Omega \times \{T\}$, 
and 
$\Omega \subset \mathbb{R}^{d}$ $(d = 1,2,3)$ denotes the spatial 
computational domain with the boundary $\partial \Omega$.
In \cite{HLN:LangerMooreNeumueller:2016a}, a time-upwind test functions were used to construct a stable single-patch discretization scheme in the Isogeometric Analysis (IgA) framework. This approach was extended in \cite{HLN:HoferLangerNeumuellerToulopoulos:2017a} to multiple patches in time, where each space-time patch $Q_n$ is given as space-time-slab
$Q_n=\Omega\times(t_{n-1},t_n)$
corresponding to a decomposition
$0=t_{0} < t_1 < \ldots  < t_{N}=T$
of the time interval $[0,T]$.
A discontinuous Galerkin (dG) technique was used for coupling  the space-time-slabs 
in an appropriate way.
Finally, the resulting huge 
linear system $\mathbf L_h \mathbf u_h = \mathbf f_h$ is solved by 
the time-parallel
multigrid (MG) method introduced in \cite{HLN:GanderNeumueller:2016a}.
The main new contributions of this paper are the smoothers 
that finally yield a robust multigrid solver and preconditioner
for the GMRES solver, respectively.

IgA is a powerful methodology for discretizing PDEs. It was first introduced in \cite{HLN:HughesCottrellBazilevs:2005a} and its  advantages have  been highlighted  in many publications,  
see, e.g., the monograph   \cite{HLN:CotrellHughesBazilevs:2009a}, 
the survey paper \cite{HLN:BeiraodaVeigaBuffaSangalliVazquez:2014a}
and the references  therein. The main idea is to use that same smooth higher order splines for 
both
representing the computational domain 
and approximating the solution of the PDE or the PDE system.
The most common choices are B-Splines, Non-Uniform Rational B-Splines (NURBS), T-Splines, Truncated Hierarchical B-Splines (THB-Splines), etc., see, e.g., \cite{HLN:GiannelliJuettlerSpeleers:2012a}, \cite{HLN:GiannelliJuettlerSpeleers:2014a} and  \cite{HLN:BazilevsCaloCottrellEvans:2010a}.
One of the strengths of IgA is the capability of creating high-order spline spaces, while keeping the number of degrees of freedom quite small. 

The purpose of this paper is to investigate the efficient realization of the
time-parallel
MG method mentioned above. The special time-multipatch dG structure 
of the discretization leads to a block-bidiagonal matrix ${\mathbf{L}_h}= \mbox{blockbidiag}(-{\mathbf{B}_n},{\mathbf{A}_n})$, 
where the block-diagonal matrices ${\mathbf{A}_n}$, $i=1,\ldots,N$, 
and the block-subdiagonal matrices ${\mathbf{B}_n}$, $i=2,\ldots,N$, 
have tensor product representations. The most costly part of the MG method is the application of the smoother, which is of  (inexact) damped block Jacobi type, i.e,
\begin{align*}
	 u_h^{k+1} = u_h^{k} + \omega \mathbf{D}_h^{-1} \left[ f_h - \mathbf{L}_h u_h^k \right] \quad \text{for } k=1,2,\ldots.
\end{align*}
The block diagonal matrix $\mathbf{D}_h$ is formed by the diagonal blocks of $\mathbf L_h$, i.e., by $\mathbf A_n$. This paper investigates the efficient application of $\mathbf A_n^{-1}$ by utilizing its tensor product structure. 
We use ideas from \cite{HLN:Tani:2017a} and \cite{HLN:SangalliTani:2016a} to perform a decomposition of $\mathbf A_n$ into a series of spatial problems, for which we investigate robust block preconditions. These preconditioners are constructed by means of operator interpolation, see, e.g., \cite{HLN:Zulehner:2011a}, \cite{HLN:BerghLofstrom:1976a} and \cite{HLN:AdamsFournier:2003a}. Moreover, their application can be further accelerated by 
using domain decomposition or multigrid approaches 
in connection with parallelization in space.

The remainder of the paper is organized as follows. 
In Section~\ref{sec:prelim}, we rephrase basic definitions and the stable space-time dG-IgA variational formulation.
Section~\ref{sec:solvers} is devoted to the construction of efficient 
 smoothers 
used in the 
time-parallel
multigrid solver respectively preconditioner.
Numerical experiments confirming the theoretical results are presented in
Section~\ref{sec:numerics}. 
Finally, we draw some conclusions in Section~\ref{sec:Conclusions}.

\section{Preliminaries}
\label{sec:prelim}

In this section, we introduce the IgA concept,
recall some important definitions, 
and state the space-time variational IgA scheme
derived and analysed in \cite{HLN:HoferLangerNeumuellerToulopoulos:2017a}. 
For a more detailed discussion of IgA, we refer to  \cite{HLN:CotrellHughesBazilevs:2009a} 
and \cite{HLN:BeiraodaVeigaBuffaSangalliVazquez:2014a}.
We follow the notation used in \cite{HLN:HoferLangerNeumuellerToulopoulos:2017a}.

\subsection{Isogeometric Analysis}
\label{sec:iga}
Let $\p{\Omega}:=(0,1)^d$, be the d-dimensional unit cube, which we refer to as the \emph{parameter domain}. 
Let $p_\iota$ and $N_\iota,\iota\in\{1,\ldots,d\}$, denote 
the degree
and the number of basis functions in $x_\iota$-direction. Moreover, let $\Xi_\iota = \{\xi_1=0,\xi_2,\ldots,\xi_{n_\iota}=1\}$, $n_\iota=N_\iota-p_\iota-1$, be a partition of $[0,1]$, called \emph{knot vector}. With this ingredients we are able to define the B-Spline basis $\p{N}_{i,p}$, $i\in\{1,\ldots,N_\iota\}$ on $[0,1]$ via Cox-De Boor's algorithm, cf. \cite{HLN:CotrellHughesBazilevs:2009a}. The generalization to $\p{\Omega}$ is realized by considering a tensor product, again denoted by $\p{N}_{i,p}$, where $i=(i_1,\ldots,i_d)$ and $p=(p_1,\ldots,p_d)$ are a multi-indices. For notational simplicity, we define 
	$\iSet:= \{(i_1,\ldots,i_d)\,|\,i_\iota \in \{1,\ldots,N_\iota\}\} $ as the set of multi-indices.

The computational domain  $\Omega$, also called \emph{physical domain}, is parametrized by the B-Spline basis functions. 
It is given as image of the parameter domain $\p{\Omega}$ under the so-called \emph{geometrical mapping} $G :\; \p{\Omega} \rightarrow \mathbb{R}^{{d}}$, defined as
\begin{align*}
	    G(\xi) := \sum_{i\in \mathcal{I}} P_i \p{N}_{i,p}(\xi),
\end{align*}
with the control points $P_i \in \mathbb{R}^{{d}}$, $i\in \mathcal{I}$.
In order to represent more complicated geometries $\Omega$, multiple non-overlapping domains (patches) $\Omega_n:=G_n(\p{\Omega}),n=1,\ldots,N$ are composed, where each patch is associated with a different geometrical mapping $G_n$. In the following, we refer to such domains $\overline{\Omega}:=\bigcup_{n=1}^N\overline{\Omega}_n$ as \emph{multipatch domains}.

In the IgA concept, the B-Splines  are not only used for representing the geometry, but also as basis for 
finite-dimensional space used for approximating the solution of the PDE.
This motivates to define the basis functions
$\g{N}_{i,p}:=\p{N}_{i,p}~\circ~G^{-1}$
in the physical space 
by mapping the corresponding basis functions $\p{N}_{i,p}$
defined in the parameter domain $\p{\Omega}$.

On each patch $\Omega_{n}$, we now define the local 
IgA
space 
\begin{align}
\label{equ:gVh}
  V_h^{n}:=\text{span}\{\g{N}_{i,p}\}_{i\in\iSet}.
\end{align}
The construction of global 
IgA
space $V_h$ depends on the used formulation, and is given in the next section.

\subsection{Space-time variational formulation and its IgA discretization}

Let $\Omega$ be a bounded Lipschitz domain in $\mathbb{R}^d$,
$d=1,2,$ or $3$, with the boundary $\Gamma = \partial \Omega$.
For any  multi-index $\boldsymbol{\alpha}=(\alpha_1,\ldots,\alpha_d)$ of non-negative integers $\alpha_1,\ldots,\alpha_d$, 
we define the differential operator
$\partial^{\boldsymbol{\alpha}}_{x}=\partial_{x_1}^{\alpha_1}\ldots \partial_{x_d}^{\alpha_d}$,
with $\partial_{x_j} = \partial / \partial x_j$, $j=1,\ldots,d$.
As usual, 
$L_2(\Omega)$ denotes the   Lebesgue  space 
of all Lebesgue measurable and square-integrable functions
endowed with the norm 
$\textstyle{\|v\|_{L_2(\Omega)}} = 
\textstyle{\Big(\int_{\Omega}|v(x)|^2\,dx\Big)^{0.5} }$,
and  $L_{\infty}(\Omega)$ denotes the space of functions that are essentially bounded. 
For a  non-negative integer ${\ell}$,
we define the standard Sobolev space
\begin{equation*}
 	H^{\ell}(\Omega)=\{v \in L_2(\Omega): \partial^{\boldsymbol{\alpha}}_{x} v \in L_{2}(\Omega)\,\text{for all}\,|\boldsymbol{\alpha}| = \textstyle \sum_{j=1}^d\alpha_j \leq \ell \},
\end{equation*}
endowed with the norm
\begin{equation*}
 \| v \|_{H^{\ell}(\Omega)} = \big
  (\sum_{0\leq |\boldsymbol{\alpha}| \leq \ell} \|\partial^{\boldsymbol{\alpha}}_{x}v \|_{L_2(\Omega)}^2
  \big
  )^{\frac{1}{2}},
\end{equation*}
whereas the trace space of $H^{1}(\Omega)$ is denoted by 
$H^{\frac{1}{2}}(\Gamma)$.
Further, we introduce the subspace 
$H^{1}_0(\Omega) =\{v \in H^{1}(\Omega):v=0\,\text{on}\, \Gamma \}$ 
of all functions $v$ from $H^{1}(\Omega)$ with zero traces on $\Gamma$.
We define the spatial gradient by
$\nabla_x v =  (\partial_{x_1} v,\ldots,\partial_{x_d} v)$.
Let $\ell$ and $m$ be positive integers.  
For functions defined in the space-time cylinder $Q$, we define the Sobolev spaces 
\begin{equation*}
H^{\ell,m}(Q)=\{v \in L_2(Q): \partial^{\boldsymbol{\alpha}}_x v \in L_2(Q)\,\text{for}\, 0\leq |\boldsymbol{\alpha}|\leq \ell,
\; \text{and}\;\partial_t^{i} v \in L_2(Q),\,i=1,\ldots,m\},
\end{equation*}
where $\partial_t = \partial / \partial t$,
and, in particular, the subspaces
\begin{align*}
H^{1,0}_0(Q)=&\{v \in L_2(Q):\nabla_x v \in  [L_2(Q)]^d,\, v=0\,\text{on}\, \Sigma\} \;\text{and}\\
H^{1,1}_{0,\bar{0}}(Q)=&\{v \in L_2(Q):\nabla_x v \in  [L_2(Q)]^d,\, \partial_t v \in L_2(Q), 
v=0\,\text{on}\, \Sigma, \,
v=0\,\text{on}\, \Sigma_T\}.
\end{align*}
We equip the above spaces with the norms and seminorms
\begin{equation*}
 \|v\|_{H^{\ell,m}(Q)}=\big(\sum_{|\boldsymbol{\alpha}|\leq \ell}
    \|\partial_{x}^{(\alpha_1,\ldots,\alpha_d)} v \|^2_{L_2(Q)}+
     { \sum_{m_0=0}^{m}  \|\partial_{t}^{m_0} v \|^2_{L_2(Q)}\big)^{\frac{1}{2}}  }
\end{equation*}
and  
\begin{equation*}
  |v|_{H^{\ell,m}(Q)}=\big(\sum_{|\boldsymbol{\alpha}| =\ell}
    \|\partial_{x}^{(\alpha_1,\ldots,\alpha_d)} v\|^2_{L_2(Q)}+
     {  \|\partial_{t}^{m} v\|^2_{L_2(Q)}\big)^{\frac{1}{2}}  },
\end{equation*}
respectively.  

Using the standard procedure and integration by parts with respect to both $x$ and $t$,
we can easily derive the following space-time variational formulation 
of (\ref{HLN:Eqn:ParabolicIBVP-CF}):
find  $u\in H^{1,0}_0(Q)$ such that 
\begin{equation}
\label{HLN:0_sspc_tme_weak}
a(u,v) = l(v) \quad \text{for all}\,v\in H^{1,1}_{0,\bar{0}}(Q),\\
\end{equation}
with the bilinear form 
\begin{equation*}
 a(u,v) = -\int_{Q}u(x,t)\partial_tv(x,t)\,dx\,dt+\int_{Q}\nabla_xu(x,t)\,\cdot\,\nabla_x v(x,t)\,dx\,dt
\end{equation*}
and the linear form
\begin{equation*}
 l(v)  = \int_{Q}f(x,t)v(x,t)\,dx\,dt+\int_{\Omega}u_0(x)v(x,0)\,dx,
\end{equation*}
where the source $f\in L_2(Q)$ and the initial conditions $u_0\in L_2(\Omega)$ are given.

Without loss of generality, we only consider homogeneous 
Dirichlet boundary conditions on $\Sigma$.
The method presented in this paper can easily be generalized to 
other constellations of boundary conditions. 
The space-time variational formulation (\ref{HLN:0_sspc_tme_weak})
has a unique solution, see, e.g, 
\cite{HLN:Ladyzhenskaya:1973a} and \cite{HLN:LadyzhenskayaSolonnikovUralceva:1967a}.
\begin{assumption}
\label{HLN:Assumption1}
 We assume that the  solution $u$ of (\ref{HLN:0_sspc_tme_weak}) belongs to
 $V= H^{1,0}_0(Q)\cap H^{\ell,m}(Q)$ with some 
$\ell \geq  2$ and $m\geq 1$.
\end{assumption}

We describe the space-time 
cylinder $Q$  as a union of  non-overlapping 
time slabs  $Q_1$, $Q_2$,\ldots,$Q_N$. 
We consider a partition $0=t_0<  t_1 < \ldots< t_N=T$ of the time interval $[0,T]$,
and denote the sub intervals by  $J_n=(t_{n-1},t_n)$. 
We now define the time slabs
$Q_n=\Omega\times J_n$ and the faces 
$\Sigma_n=\overline{Q}_{n+1}\cap \overline{Q}_n=\Omega \times \{t_n\}$ 
between the time slabs,
where we identify $\Sigma_T$ and $\Sigma_N$. 
In that way, we have the decomposition
$\overline{Q}=\cup_{n=1}^N \overline{Q}_n$, where each space-time cylinder $Q_n$ has a geometrical mapping $G_n$. To keep the notation simple,  in what follows, we will use 
the sup-index $n$ to  denote the restrictions  to $Q_n$, e.g., $u^n:=u|_{Q_n}$.

\begin{remark}
We note that the spatial domain $\Omega$ can also be a multipatch domain. This leads to a representation of $Q_n$ as union of non-overlapping 
space-time patches
$Q_{n,k}, k=1,\ldots,K$, i.e.,  $\overline{Q}_n=\cup_{k=1}^K \overline{Q}_{n,k}$.  The corresponding bases are then coupled in a conforming way. 
\end{remark}

We denote the    global discontinuous B-Spline space 
and the local continuous patch-wise B-Spline  spaces by 
\begin{equation}
\label{HLN:global_Bspl_spc_time}
 V_{0h}= \{v_h\in L_2(Q):
	v_h|_{Q_n}\in V_h^n,\,\text{for}\, n=1,\ldots,N,\, \text{and}\,v_h|_{ \Sigma} =0\}
\end{equation}
and
\begin{equation}
\label{HLN:local_Bspl_spc_time}
 V_{0h}^{n}= \{v_h\in  V_h^n, \,\text{for}\, n=1,\ldots,N,
	\, \text{and}\,v_h|_{ \Sigma} =0\},
\end{equation}
respectively. Notice that $v_h\in V_{0h}$ is discontinuous across $\Sigma_n$. 
We introduce the notations
\begin{equation*}
	v_{h,+}^n= \lim_{\varepsilon \to 0^+}v_h(t_n+\varepsilon),\;
	v_{h,-}^n= \lim_{\varepsilon \to 0^-}v_h(t_n+\varepsilon),\;
	\llbracket v_h \rrbracket^n = v_{h,+}^n - v_{h,-}^n,\;
	\llbracket v_h \rrbracket^0 = v_{h,+}^0,
\end{equation*}
where $\llbracket v_h \rrbracket^n$ denotes the jump of $v_h$  across $\Sigma_n$ for $n \geq 1$, and 
$\llbracket v_h \rrbracket^0 = v_{h,+}^0$ denotes 
the trace of $v_h$ on $\Sigma_0$.
For a smooth function $u$, we obviously have 
$	\llbracket u \rrbracket^{n} =u^{n}_{+}-u^n_{-}=0\,\text{ for}\, n\geq 1,$ 
and 
$
	\llbracket u \rrbracket^0 =u|_{\Sigma_0}.
$

Let us now consider the space-time slab $Q_n$, 
and let us denote the outer normal  to 
 $\partial Q_n$
by 
$\mathbf{n}=(n_1,\ldots,n_d,n_{d+1})=(\mathbf{n}_x,n_t)$.  
For the time being, we assume 
that $u^{n-1}$ is known. Let $v^n_h\in V_{0h}^{n}$ and  $w^{n}_h=v^{n}_h+\theta_n\,h_{n}\partial_t v^{n}_h$
with some positive parameter $\theta_n$, 
which will be defined later. 
We note that $w^{n}_h\big|_{\Sigma}=0$. 
Multiplying 
$\partial_t u - \Delta u = f$
by $w^{n}_h$, integrating over $Q_n$,  and  applying integration by parts,
we arrive at the variational identity
\begin{align*}
& \int_{Q_n} (\partial_t\,u\,(v_h^{n}+\theta_n\,h_{n}\partial_t v_h^{n}) + \nabla_x\,u\cdot\nabla_x\,v_h^{n}+
	\theta_n\,h_{n}\nabla_x u\cdot\nabla_x\partial_tv_h^{n})\,dx\,dt \\
	\nonumber
&	- \int_{\partial Q_n }\,n_x\cdot \nabla_x u(v_h^{n}+\theta_n\,h_{n}\partial_t v_h^{n})\,dx
+ \int_{\Sigma_{n-1}} u^{n-1}_{+}\,v_{h,+}^{n-1}\,dx\\
\nonumber
& = \int_{Q_n}f\,(v_h^{n}+\theta_n\,h_{n}\partial_t v_h^{n})\,dx\,dt+\int_{\Sigma_{n-1}} u^{n-1}_{-}\,v_{h,+}^{n-1}\,dx
\end{align*}
for $n = 1,\ldots,N$,
where we used that  $u^{n-1}_{-}=u^{n-1}_{+}=u^{n-1}$ on every 
$\Sigma_{n-1}$.
Furthermore,  
using  $n_x|_{\Sigma_n}=0$
and  $w_h=0$ on $\Sigma$, we have 
\begin{align*}
a_{Q_n}(u,v_h):=&\int_{Q_n}(\partial_t\,u\,(v_h^{n}+\theta_n\,h_{n}\partial_t v_h^{n}) + \nabla_x\,u\cdot\nabla_x\,v_h^{n}+
	\theta_n\,h_{n}\nabla_x u\cdot\nabla_x\partial_tv_h^{n})\,dx\,dt \\
	\nonumber
 & +\int_{\Sigma_{n-1}}\llbracket u\rrbracket^{n-1} \,v_{h,+}^{n-1}\,dx=
\int_{Q_n}f\,(v_h^{n}+\theta_n\,h_{n}\partial_t v_h^{n})\,dx\,dt,
\end{align*}
for all $n=2,\ldots, N$, and 
\begin{align*}
a_{Q_1}(u,v_h):=&\int_{Q_1} (\partial_t\,u\,(v_h^{1}+\theta_1\,h_{1}\partial_t v_h^{1}) + \nabla_x\,u\cdot\nabla_x\,v_h^{1}+
	\theta_1\,h_{1}\nabla_x u\cdot\nabla_x\partial_tv_h^{1})\,dx\,dt \\
	\nonumber
 & +\int_{\Sigma_{0}}\llbracket u\rrbracket^{0} \,v_{h,+}^{0}\,dx=
\int_{Q_1}f\,(v_h^{1}+\theta_1\,h_{1}\partial_t v_h^{1})\,dx\,dt + 
\int_{\Sigma_0}u_0\,v_{h,+}^{0}\,dx.
\end{align*}
Summing over all $Q_n$, we conclude that
\begin{equation}
\label{HLN:DGIGA_Schme_03}
a_h(u,v_h)= l_h(v_h),\quad \forall v_h\in V_{0h},
\end{equation}
where 
\begin{equation*}
a_h(u,v_h) = \sum_{n=1}^Na_{Q_n}(u,v_h) 
\end{equation*}
and 
\begin{equation*}
l_h(v_h) =\sum_{n=1}^N \int_{Q_n}f\,(v^n_h+\theta_n\,h_n\partial_t v^n_h)\,dx\,dt +
\int_{\Sigma_0}u_0\,v_{h,+}^{0}\,dx.
\end{equation*}
Now, the space-time dG IgA variational scheme for 
(\ref{HLN:Eqn:ParabolicIBVP-CF})
reads
as follows: 
Find $u_h\in V_{0h}$ such that
\begin{equation}
\label{HLN:DGIGA_Schme}
 a_h(u_h,v_h) = l_h(v_h),\quad \forall v_h\in V_{0h}.
\end{equation}

Motivated by 
the definition of the bilinear form $a_h(\cdot,\cdot)$ in (\ref{HLN:DGIGA_Schme}),
we introduce the mesh-dependent dG norm
\begin{align*}
	\|v\|_{dG}:=\Big(\sum_{n=1}^N\Big(\|\nabla_x v\|^2_{L_2(Q_n)}+\theta_n\,h_{n}\,\|\partial_tv\|^2_{L_2(Q_n)}+
           \frac{1}{2}\|\llbracket v \rrbracket^{n-1} \|^2_{L_2(\Sigma_{n-1})}\Big)\ +\frac{1}{2}\|v\|^2_{L_2(\Sigma_{N})} \Big)^{\frac{1}{2}},
\end{align*}
In the following, we recall some important properties of the 
IgA scheme (\ref{HLN:DGIGA_Schme}) respectively 
the bilinear form $a_h(\cdot,\cdot)$.
For the proofs, we refer to \cite{HLN:HoferLangerNeumuellerToulopoulos:2017a}.
\begin{lemma}
	The 
	bilinear form $a_h(\cdot,\cdot)$, 
	defined in (\ref{HLN:DGIGA_Schme}), 
	is
	$V_{0h}$-elliptic, i.e., 
	\begin{align}\label{HLN:Coercivity}
		a_h(v_h,v_h)\geq C_e\|v_h\|_{dG}^2, \quad\text{for}{\ }v_h\in V_{0h},
	\end{align}
where 
$C_e= 0.5$
for $\theta_n \leq  C_{inv,0}^{-2}$, 
with the positive, $h_n$-independent constant $C_{inv,0}$ from the inverse inequality 
\begin{equation*}
 \|v_h\|^2_{L_2(\Sigma_{n-1})} \le C_{inv,0} h_n^{-1} \|v_h\|^2_{L_2(Q_{n})}
\end{equation*}
that holds for all $v_h \in V_h^n$, $n=1,\ldots,N$.
\end{lemma}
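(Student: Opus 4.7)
The plan is to substitute $u = v_h$ in the identity $a_h(v_h,v_h)=\sum_n a_{Q_n}(v_h,v_h)$ and show that, up to one indefinite cross-contribution, every term of $\|v_h\|_{dG}^2$ is recovered by an integration by parts in time on each slab combined with a telescoping argument across the interfaces. The coercivity constant then comes purely from absorbing that one indefinite term via the stated inverse trace inequality.

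The first block of steps consists of local identities on each $Q_n$. Integration by parts in time converts the two derivative--function products:
\[
\int_{Q_n}\partial_t v_h\,v_h\,dx\,dt = \tfrac{1}{2}\bigl(\|v_{h,-}^n\|^2_{L_2(\Sigma_n)}-\|v_{h,+}^{n-1}\|^2_{L_2(\Sigma_{n-1})}\bigr),
\]
\[
\theta_n h_n\int_{Q_n}\nabla_x v_h\cdot\nabla_x\partial_t v_h\,dx\,dt = \tfrac{\theta_n h_n}{2}\bigl(\|\nabla_x v_{h,-}^n\|^2_{L_2(\Sigma_n)}-\|\nabla_x v_{h,+}^{n-1}\|^2_{L_2(\Sigma_{n-1})}\bigr),
\]
while the non-negative contributions $\theta_n h_n\|\partial_t v_h\|^2_{L_2(Q_n)}$ and $\|\nabla_x v_h\|^2_{L_2(Q_n)}$ stay unchanged. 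The interface term $\int_{\Sigma_{n-1}}\llbracket v_h\rrbracket^{n-1} v_{h,+}^{n-1}\,dx$, combined with the negative boundary piece $-\tfrac{1}{2}\|v_{h,+}^{n-1}\|^2_{L_2(\Sigma_{n-1})}$ produced above, is processed by the elementary identity $(a-b)a = \tfrac{1}{2}(a^2+(a-b)^2-b^2)$ to give $\tfrac{1}{2}\|\llbracket v_h\rrbracket^{n-1}\|^2_{L_2(\Sigma_{n-1})} - \tfrac{1}{2}\|v_{h,-}^{n-1}\|^2_{L_2(\Sigma_{n-1})}$.

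Summing over $n=1,\dots,N$ and using the convention $v_{h,-}^0 := 0$ (consistent with $\llbracket v_h\rrbracket^0 = v_{h,+}^0$), the $\|v_{h,-}^n\|^2$--contributions telescope into $\tfrac{1}{2}\|v_h\|^2_{L_2(\Sigma_N)}$. Together with $\tfrac{1}{2}\sum_n\|\llbracket v_h\rrbracket^{n-1}\|^2_{L_2(\Sigma_{n-1})}$, the non-negative $\sum_n\theta_n h_n\|\partial_t v_h\|^2_{L_2(Q_n)}$, and $\sum_n\|\nabla_x v_h\|^2_{L_2(Q_n)}$, this already reproduces all four contributions of $\|v_h\|_{dG}^2$ as identities.

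The main obstacle -- and the only place an inequality enters -- is handling the residual indefinite telescoping sum $\sum_n \tfrac{\theta_n h_n}{2}(\|\nabla_x v_{h,-}^n\|^2_{L_2(\Sigma_n)} - \|\nabla_x v_{h,+}^{n-1}\|^2_{L_2(\Sigma_{n-1})})$. I would drop the non-negative part and apply the stated inverse trace inequality componentwise to $\nabla_x v_h$ on $Q_n$, obtaining $\tfrac{\theta_n h_n}{2}\|\nabla_x v_{h,+}^{n-1}\|^2_{L_2(\Sigma_{n-1})}\le \tfrac{\theta_n C_{inv,0}}{2}\|\nabla_x v_h\|^2_{L_2(Q_n)}$. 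Under the assumption $\theta_n\le C_{inv,0}^{-2}$ this absorbs at most half of the $\|\nabla_x v_h\|^2_{L_2(Q_n)}$ contribution, leaving $a_h(v_h,v_h)\ge\tfrac{1}{2}\|v_h\|_{dG}^2$. The delicate point is that the inverse inequality as stated applies to $v_h$ itself, so the componentwise use on $\nabla_x v_h$ -- a spline of one lower degree in the $x_\iota$-directions -- has to absorb an auxiliary constant into $C_{inv,0}$, which is what motivates the square in the exponent of the assumed bound on $\theta_n$.
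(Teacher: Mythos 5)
Your route is the standard one and, as far as one can tell from the cited source, it is the proof the paper has in mind: substitute $u=v_h$, integrate by parts in time on each slab for the two terms containing a time derivative paired with an undifferentiated factor, use the identity $(a-b)a=\tfrac12(a^2+(a-b)^2-b^2)$ on the upwind interface term, let the $\|v_{h,-}^{n}\|^2_{L_2(\Sigma_n)}$ pieces telescope to $\tfrac12\|v_h\|^2_{L_2(\Sigma_N)}$, and then absorb the one remaining sign-indefinite term $\sum_n\tfrac{\theta_n h_n}{2}\bigl(\|\nabla_x v_{h,-}^n\|^2-\|\nabla_x v_{h,+}^{n-1}\|^2\bigr)$ with the time-trace inverse inequality. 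All of that is correct, and the identity $a_h(v_h,v_h)=\|v_h\|_{dG}^2+\sum_n\tfrac{\theta_n h_n}{2}\bigl(\|\nabla_x v_{h,-}^n\|^2_{L_2(\Sigma_n)}-\|\nabla_x v_{h,+}^{n-1}\|^2_{L_2(\Sigma_{n-1})}\bigr)$ is exactly right.

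The only flaw is the final heuristic, which both misdiagnoses the exponent in $\theta_n\le C_{inv,0}^{-2}$ and injects an unnecessary worry. The trace inequality $\|w(\cdot,t_{n-1})\|^2_{L_2(\Omega)}\le C_{inv,0}\,h_n^{-1}\|w\|^2_{L_2(Q_n)}$ is a one-dimensional inequality in the time variable: for any $w$ whose restriction $w(x,\cdot)$ lies in the fixed time spline space $V_{n,t}$ for a.e.\ $x$, you apply the 1D bound pointwise in $x$ and integrate over $\Omega$. Since $\partial_{x_\iota}v_h\in \widetilde V_{n,x}\otimes V_{n,t}$ still has the same time factor $V_{n,t}$, the constant is unchanged; the lowered spatial degree is irrelevant, and no auxiliary constant has to be absorbed. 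With the trace inequality in the squared form stated in the lemma, your estimate gives $a_h(v_h,v_h)\ge \bigl(1-\tfrac12\max_n\theta_nC_{inv,0}\bigr)\|v_h\|^2_{dG}$, so the sharp sufficient condition is actually $\theta_n\le C_{inv,0}^{-1}$. The stated $\theta_n\le C_{inv,0}^{-2}$ is a strictly stronger assumption (one always has $C_{inv,0}\ge1$, as constants show), so the conclusion still follows; the square in the exponent is an artefact of the reference stating the inverse inequality in an unsquared form $\|v_h\|_{L_2(\Sigma_{n-1})}\le C_{inv,0}h_n^{-1/2}\|v_h\|_{L_2(Q_n)}$ rather than anything to do with the degree of $\nabla_x v_h$.
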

The 
$V_{0h}$-ellipticity of the bilinear form $a_h(\cdot,\cdot)$
implies that there exists a unique solution to \eqref{HLN:DGIGA_Schme_03}. In order to obtain a priori error estimates, we introduce the space $V_{0h,*}=V+V_{0h}$ endowed with the norm
\begin{align}\label{HLN:Norm_Voh*}
	\|v\|_{dG,*}:=\Big(\|v\|^2_{dG}+\sum_{n=1}^N(\theta_n h_{n})^{-1}\|v\|^2_{L_2(Q_n)}
     + \sum_{n=2}^N\|v_{-}^{n-1}\|^2_{L_2(\Sigma_{n-1})}	\Big)^{\frac{1}{2}}.
\end{align}
\begin{lemma}\label{HLN:lemmaA_h_continuity}
 Let $u\in V_{0h,*}$. Then the boundedness inequality
 \begin{align}\label{A_h_con_1}
  |a_h(u,v_h)| \leq C_b \|u\|_{dG,*}\|v_h\|_{dG}
 \end{align}
 holds for all $v_h\in V_{0h}$,
 where $C_b=\max(C_{inv,1}\,\theta_{max},2)$, 
 with  $\textstyle{\theta_{max}=\max_{n}\{\theta_n\}\leq C^{-2}_{inv,0}}$ 
 and the positive, $h_n$-independent constant $C_{inv,1}$ from the inverse inequality
\begin{equation*}
 \|\partial_t \partial_{x_i} v_h\|^2_{L_2(Q_{n})} \le C_{inv,1} h_n^{-2} \|\partial_{x_i} v_h\|^2_{L_2(Q_{n})}
\end{equation*}
that holds for all $v_h \in V_h^n$, $n=1,\ldots,N$, $i=1,\ldots,n$.
\end{lemma}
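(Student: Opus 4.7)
The plan is to bound each of the five terms that make up $a_{Q_n}(u,v_h)$ and then sum over $n$ using a discrete Cauchy--Schwarz inequality. Four terms are straightforward, but the first time-derivative term is the obstacle and requires integration by parts.

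First, observe that for the three ``symmetric'' volume terms, direct Cauchy--Schwarz suffices: $|\int_{Q_n}\nabla_x u\cdot\nabla_x v_h^n|$ pairs $\|\nabla_x u\|_{L_2(Q_n)}$ (inside $\|u\|_{dG,*}$) with $\|\nabla_x v_h\|_{L_2(Q_n)}$ (inside $\|v_h\|_{dG}$), while for $|\int_{Q_n}\theta_n h_n(\partial_t u)(\partial_t v_h^n)|$ I split the weight as $\sqrt{\theta_n h_n}\cdot\sqrt{\theta_n h_n}$ to match both norms exactly. For the stabilization cross-term $|\int_{Q_n}\theta_n h_n\nabla_x u\cdot\nabla_x\partial_t v_h^n|$, I apply Cauchy--Schwarz and then kill the excess factor of $h_n^{-1}$ using the inverse inequality $\|\nabla_x\partial_t v_h\|_{L_2(Q_n)}\le C_{inv,1}^{1/2}h_n^{-1}\|\nabla_x v_h\|_{L_2(Q_n)}$ stated in the lemma, which produces the constant factor $C_{inv,1}\theta_{max}$ (up to squaring convention).

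The hard part is $\int_{Q_n}\partial_t u\,v_h^n$: neither $\|\partial_t u\|_{L_2(Q_n)}$ (unweighted) nor $\|v_h^n\|_{L_2(Q_n)}$ is directly available in the two norms. My plan is to integrate by parts in time, transferring $\partial_t$ onto $v_h$, which yields $-\int_{Q_n} u\,\partial_t v_h^n+\int_{\Sigma_n}u_-^n v_{h,-}^n-\int_{\Sigma_{n-1}}u_+^{n-1}v_{h,+}^{n-1}$. The volume piece is then bounded by the weighted Cauchy--Schwarz splitting $(\theta_n h_n)^{-1/2}\|u\|_{L_2(Q_n)}\cdot(\theta_n h_n)^{1/2}\|\partial_t v_h\|_{L_2(Q_n)}$, each factor taken from the respective norm.

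It remains to handle the surface contributions after IBP together with the jump term $\int_{\Sigma_{n-1}}\llbracket u\rrbracket^{n-1}v_{h,+}^{n-1}$. The $u_+^{n-1}$ pieces cancel, leaving $-\int_{\Sigma_{n-1}}u_-^{n-1}v_{h,+}^{n-1}+\int_{\Sigma_n}u_-^n v_{h,-}^n$; summing over $n=1,\dots,N$ and shifting indices, the internal faces combine into $-\sum_{n=1}^{N-1}\int_{\Sigma_n}u_-^n\llbracket v_h\rrbracket^n$ plus the single terminal term $\int_{\Sigma_N}u_-^N v_{h,-}^N$. The jump sum is controlled by the contributions $\sum_{n=2}^N\|u_-^{n-1}\|_{L_2(\Sigma_{n-1})}^2$ in $\|u\|_{dG,*}$ paired with $\tfrac12\|\llbracket v_h\rrbracket^n\|^2$ in $\|v_h\|_{dG}$, and the terminal term by $\tfrac12\|u\|_{L_2(\Sigma_N)}^2\le \|u\|_{dG}^2$ paired with $\tfrac12\|v_h\|_{L_2(\Sigma_N)}^2\le\|v_h\|_{dG}^2$, which is where the constant $2$ in $C_b$ originates. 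Assembling all six contributions via a discrete Cauchy--Schwarz on the slab index $n$ yields the claim, with $C_b$ equal to the worst coefficient among $\{1,\,C_{inv,1}\theta_{max},\,2\}=\max(C_{inv,1}\theta_{max},2)$.
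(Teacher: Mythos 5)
Your argument is correct, and integration by parts in time is indeed the crux of the proof: the extra terms $(\theta_n h_n)^{-1}\|\cdot\|_{L_2(Q_n)}^2$ and $\sum_{n\geq2}\|v_-^{n-1}\|_{L_2(\Sigma_{n-1})}^2$ in the definition of $\|\cdot\|_{dG,*}$ exist precisely so that, after transferring $\partial_t$ onto $v_h$, the resulting volume piece pairs with $(\theta_n h_n)^{1/2}\|\partial_t v_h\|_{L_2(Q_n)}$ and the newly produced interface traces pair with $\|\llbracket v_h\rrbracket^n\|_{L_2(\Sigma_n)}$; the paper defers the proof to \cite{HLN:HoferLangerNeumuellerToulopoulos:2017a}, which proceeds in exactly this way, and your telescoping of the boundary contributions against the jump term is carried out correctly. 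One remark on the constant: with the inverse inequality as printed (squares on both sides), the cross-stabilization term yields the factor $\sqrt{C_{inv,1}}\,\theta_n$ rather than $C_{inv,1}\,\theta_n$ (which you flagged), and since $\|\nabla_x u\|_{L_2(Q_n)}$ and $(\theta_n h_n)^{1/2}\|\partial_t v_h\|_{L_2(Q_n)}$ each enter two of your pairings, a careful global Cauchy--Schwarz gives $C_b$ of the form $\max(1+\sqrt{C_{inv,1}}\,\theta_{max},\,2)$ rather than literally $\max(C_{inv,1}\,\theta_{max},2)$ --- this reflects a looseness in the paper's stated constant, not a gap in your argument.
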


\begin{theorem}\label{HLN:theorem_1ST}
Let $u$  and $u_h$ solve (\ref{HLN:0_sspc_tme_weak})
and (\ref{HLN:DGIGA_Schme}), respectively.
Under the regularity Assumption \ref{HLN:Assumption1}, 
	there exists a positive generic constant $C$, 
	which is independent of $h = \max\{h_n\}$, 
	such that 
	\begin{align}\label{HLN:Theorem1_1ST}
		\|u-u_h\|_{dG}\leq  C( h^{\ell-1}+ h^{m-\frac{1}{2}})\,\|u\|_{H^{\ell,m}({Q})}.
	\end{align}
Moreover,  if $1\leq m<\ell\leq p+1$, then 
\begin{align}\label{HLN:theorem1_2ST}
		\|u-u_h\|_{dG}\leq C h^{m-\frac{1}{2}}\|u\|_{H^{\ell,m}({Q})}.
	\end{align}
\end{theorem}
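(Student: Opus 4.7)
The plan is a standard Strang/Céa-type argument that leverages the two pillars already available: the $V_{0h}$-ellipticity of $a_h$ from (\ref{HLN:Coercivity}) and the boundedness on $V_{0h,*}\times V_{0h}$ from \lemref{HLN:lemmaA_h_continuity}. The only extra ingredients I need are Galerkin orthogonality and sharp anisotropic IgA approximation estimates on each time slab; the two asserted rates will then follow by a routine comparison of powers of $h$.

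First, I would verify Galerkin orthogonality. Under Assumption \ref{HLN:Assumption1} the exact solution is regular enough that $\partial_t u - \Delta u = f$ holds a.e.\ in each $Q_n$, with $\llbracket u \rrbracket^n = 0$ for $n \geq 1$ and $\llbracket u \rrbracket^0 = u_0$; reinserting $u$ into the derivation that produced (\ref{HLN:DGIGA_Schme_03}) shows $a_h(u, v_h) = l_h(v_h)$ for all $v_h \in V_{0h}$, hence $a_h(u - u_h, v_h) = 0$. Choosing a tensor-product quasi-interpolant $\Pi_h u \in V_{0h}$ and splitting $u - u_h = \eta + e_h$ with $\eta := u - \Pi_h u$ and $e_h := \Pi_h u - u_h \in V_{0h}$, coercivity applied to $e_h$ together with orthogonality gives
\begin{equation*}
C_e \,\|e_h\|_{dG}^2 \leq a_h(e_h,e_h) = -a_h(\eta, e_h) \leq C_b\, \|\eta\|_{dG,*}\, \|e_h\|_{dG},
\end{equation*}
so $\|e_h\|_{dG} \leq (C_b/C_e)\|\eta\|_{dG,*}$, and a triangle inequality yields $\|u-u_h\|_{dG} \leq (1 + C_b/C_e)\|\eta\|_{dG,*}$. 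It therefore suffices to bound $\|\eta\|_{dG,*}$.

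The final step is to estimate $\|\eta\|_{dG,*}$ slab by slab using anisotropic IgA approximation results in $H^{\ell,m}(Q_n)$. Standard tensor-product error bounds yield $\|\nabla_x \eta\|_{L_2(Q_n)} \lesssim h^{\ell-1}\,|u|_{H^{\ell,m}(Q_n)}$, while the weighted temporal term $\sqrt{\theta_n h_n}\,\|\partial_t\eta\|_{L_2(Q_n)}$, the jump contribution $\|\llbracket \eta\rrbracket^{n-1}\|_{L_2(\Sigma_{n-1})}$, the one-sided trace $\|\eta_-^{n-1}\|_{L_2(\Sigma_{n-1})}$, and the scaled $L_2$-term $(\theta_n h_n)^{-1/2}\|\eta\|_{L_2(Q_n)}$ all scale as $h^{m-1/2}\,|u|_{H^{\ell,m}(Q_n)}$ (the trace/jump terms via a trace inequality on $\Sigma_{n-1}$, the last term via $h^{-1/2}\cdot h^{m}$). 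Summing over $n$ and setting $h = \max_n h_n$ produces the composite rate in (\ref{HLN:Theorem1_1ST}). In the refined regime $1 \leq m < \ell \leq p+1$ one has $m-1/2 < \ell - 1$, so $h^{m-1/2}$ dominates $h^{\ell-1}$ for small $h$ and (\ref{HLN:theorem1_2ST}) drops out as a corollary.

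The hard part will be choosing $\Pi_h u$ so that all pieces of the $\|\cdot\|_{dG,*}$-norm are matched simultaneously with the correct mixed powers of $h$. Since $V_{0h}$ is discontinuous across each $\Sigma_n$, a naive slab-by-slab interpolant makes $\llbracket \eta \rrbracket^{n-1} = -\llbracket \Pi_h u \rrbracket^{n-1}$ potentially of the wrong order. The cleanest remedy is to take $\Pi_h u$ continuous in time (for instance a spatial quasi-interpolant composed with a global temporal Schoenberg/Schumaker-type projector of maximal regularity), so that the jumps of $\eta$ reduce to pure trace errors of $u - \Pi_h u$, already controlled by the same anisotropic estimate that handles $(\theta_n h_n)^{-1/2}\|\eta\|_{L_2(Q_n)}$. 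With such a $\Pi_h$, the $h^{m-1/2}$ contributions combine cleanly and no spurious $h^{-1/2}$ factor appears.
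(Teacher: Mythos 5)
Your Strang/C\'ea-type argument is exactly the route the paper sets up: it states the $V_{0h}$-ellipticity (\ref{HLN:Coercivity}) and the boundedness bound (\ref{A_h_con_1}) on $V_{0h,*}\times V_{0h}$ immediately before Theorem~\ref{HLN:theorem_1ST} and then defers the proof to \cite{HLN:HoferLangerNeumuellerToulopoulos:2017a}, where the same Galerkin-orthogonality + quasi-interpolation argument appears. One small remark: your concern that a slab-by-slab interpolant makes $\llbracket\eta\rrbracket^{n-1}=-\llbracket\Pi_h u\rrbracket^{n-1}$ ``potentially of the wrong order'' is unfounded, since $\llbracket\Pi_h u\rrbracket^{n-1}$ splits into the two one-sided trace errors of $\Pi_h u$ against the continuous $u$ at $t_{n-1}$, each already $O(h^{m-1/2})$ by the same inverse/trace estimate you use for the other boundary terms, so the temporally continuous projector you propose as a remedy is harmless but not needed.
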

\begin{remark}\label{HLN:Estim_MaxRegul}
We remark that, for the case of highly smooth solutions, i.e., $p+1 \leq \min (\ell,m)$,  
estimate (\ref{HLN:Theorem1_1ST}) takes the form
\begin{align}\label{HLN:Theorem1_5ST}
 \|u-u_h\|_{dG}\leq  C\, h^{p}\,\|u\|_{H^{\ell,m}({Q})}.
	\end{align}	
\end{remark}

\subsection{Efficient Matrix Assembly}

Let us recall the 
IgA
variational problem given in \eqref{HLN:DGIGA_Schme}. 
The local bilinear form for each space-time slab $Q_n$
is given by
\begin{align*}
 a_{Q_n}(u_h,v_h) =& \int_{Q_n}\partial_t\,u_h^n\,(v_h^{n}+\theta_n\,h_{n}\partial_t v_h^{n}) + \nabla_x\,u_h^n\cdot\nabla_x (v_h^{n}+\theta_n\,h_{n}\partial_t v_h^{n})\,dx\,dt\\ & +\int_{\Sigma_{n-1}} u^{n-1}_{h,+}\,v_{h,+}^{n-1}\,ds 
 - \int_{\Sigma_{n-1}} u^{n-1}_{h,-}\,v_{h,+}^{n-1}\,ds \\
 =:& b_{Q_n}(u^n_h,v^n_h) - \int_{\Sigma_{n-1}} u^{n-1}_{h,-}\,v_{h,+}^{n-1}\,ds,
\end{align*}
where $n = 1,\ldots,N$.
For the local spaces $V^{n}_{0h}$
defined by (\ref{HLN:local_Bspl_spc_time}), 
we now introduce the simpler notation $\varphi_j^n$ for the B-Spline basis functions 
such that
\begin{align*}
 V^{n}_{0h} = \text{span}\{ \varphi_j^n \}_{j = 1}^{N_n}
\end{align*}
for $n = 1,\ldots,N$.
Once the basis is chosen, 
from the IgA variational scheme \eqref{HLN:DGIGA_Schme},
we immediately obtain 
the linear system
\begin{align}\label{HLN:equ_linear_system}
 \mathbf{L}_h \vec{u}_h := 
 \begin{pmatrix}
    \mathbf{A}_1 \\
    -\mathbf{B}_2 & \mathbf{A}_2 \\
    && \ddots & \ddots \\
    &&& -\mathbf{B}_N & \mathbf{A}_N
 \end{pmatrix}
 \begin{pmatrix}
  \vec{u}_1 \\
  \vec{u}_2 \\
  \vdots \\
  \vec{u}_N
 \end{pmatrix}
= 
 \begin{pmatrix}
  \vec{f}_1 \\
  \vec{f}_2 \\
  \vdots \\
  \vec{f}_N
 \end{pmatrix} =: \vec{f}_h,
\end{align}
 with the matrices 
\begin{align*}
 \mathbf{A}_n[i,j] := b_{Q_n}(\varphi^n_j,\varphi^n_i) \quad  \text{for } i,j = 1,\ldots,N_n
\end{align*}
on the diagonal for $n=1,\ldots,N$,
and the matrices 
\begin{align*}
  \mathbf{B}_n[i,k] := \int_{\Sigma_{n-1}} \varphi^{n-1}_{k,-}\,\varphi_{i,+}^{n-1}\,ds \quad \text{for } k = 1,\ldots,N_{n-1} \text{ and } i = 1,\ldots,N_n.
\end{align*}
on the lower off diagonal for $n=2,\ldots,N$.
Moreover, the right hand sides are given 
by
\begin{align*}
 \vec{f}_n[i] := l_h(\varphi^n_i), \quad 
 i = 1,\ldots,N_n,
\end{align*}
for $n=1,\ldots,N$.

If the geometrical mappings $G_n: \widehat{Q}\rightarrow Q_n, n=1,\ldots,N$, preserve the tensor product structure of the IgA basis functions $\varphi_i^n$, we can use this information to save assembling time and storage costs for the linear system \eqref{HLN:equ_linear_system}. 
In this case, we can write the basis functions $\varphi_i^n$ in the form
\begin{align*}
 \varphi_i^n(x,t) = \phi_{i_x}^n(x) \psi_{i_t}^n(t) \quad \text{with } i_x \in \{ 1,\ldots,N_{n,x} \} \text{ and } i_t \in \{ 1,\ldots,N_{n,t} \},
\end{align*}
where $N_n = N_{n,x} N_{n,t}$. Using this representation, we can write the matrices $\mathbf{A}_n, n = 1,\ldots,N$ as
\begin{align}
\label{equ:structureAn}
 \mathbf{A}_n = {\mathbf{K}}_{n,t}\otimes {\mathbf{M}}_{n,x}   +   {\mathbf{M}}_{n,t} \otimes {\mathbf{K}}_{n,x},
\end{align}
with the standard mass and stiffness matrices with respect to space
\begin{align*}
   {\mathbf{M}}_{n,x}[i_x,j_x] &:= \int_{\Omega} \phi_{j_x}^n \phi_{i_x}^n \, dx, \qquad {\mathbf{K}}_{n,x}[i_x,j_x] := \int_{\Omega} \nabla_x\phi_{j_x}^n \cdot \nabla_x\phi_{i_x}^n \,dx,
\end{align*}
where $i_x,j_x = 1,\ldots,N_{n,x}$, and corresponding matrices with respect to time
\begin{align}
\label{equ:TimeMatrices}
\begin{split}
  {\mathbf{K}}_{n,t}[i_t, j_t] &:= \int_{t_{n-1}}^{t_n} \partial_{t}\psi_{j_t}^n (\psi_{i_t}^n + \theta_n h_n \partial_{t}\psi_{i_t}^n) \,dt + \psi_{j_t}^n(t_{n-1}) \psi_{i_t}^n(t_{n-1}), \\
 {\mathbf{M}}_{n,t}[i_t, j_t] &:= \int_{t_{n-1}}^{t_n} \psi_{j_t}^n (\psi_{i_t}^n + \theta_n h_n \partial_{t}\psi_{i_t}^n) \,dt,
\end{split}
 \end{align}
with $i_t,j_t = 1,\ldots,N_{n,t}$.
The matrices on the off diagonal $\mathbf{B}_n, n=2,\ldots,N$, can be written in the form
\begin{align*}
 \mathbf{B}_n :=   {\mathbf{N}}_{n,t} \otimes {\widetilde{\mathbf{M}}}_{n,x},
\end{align*}
with the matrices
\begin{align*}
 {\widetilde{\mathbf{M}}}_{n,x}[i_x,k_x] &:= \int_{\Omega}\phi_{k_x}^{n-1} \phi_{i_x}^n\,dx \quad \text{and} \quad {\mathbf{N}}_{n,t}[i_t, k_t] := \psi_{k_t}^{n-1}(t_{n-1}) \psi_{i_t}^n(t_{n-1}),
\end{align*}
where $i_x = 1,\ldots,N_{n,x}$, $k_x = 1,\ldots,N_{n-1,x}$, $i_t = 1,\ldots,N_{n,t}$ and $k_t = 1,\ldots,N_{n-1,t}$.

\section{Solvers for space-time problems}
\label{sec:solvers}
This section aims at the development of an efficient solver for 
the huge space-time system \eqref{HLN:equ_linear_system}.
Our new solver is based on the time parallel multigrid method 
proposed
in \cite{HLN:GanderNeumueller:2016a}, see also the PhD~thesis \cite{HLN:Neumueller:2013a}. 
The key point in realizing the method efficiently is the application of the smoother, which is the most costly part of the algorithm. The goal is to utilize the structure of the involved matrix $\mathbf A_n^{-1}$, which then allows for a faster application.

\subsection{Time-parallel multigrid}

We want to give an overview of the 
time-parallel 
multigrid method introduced in \cite{HLN:Neumueller:2013a}. Multigrid consists of three main ingredients: the coarse grid solver, the smoother and the prolongation/restriction operators. Concerning the restriction and prolongation operator, it is advantageous to consider coarsening in space and in time separately. 
The  restriction 
in time direction is realized by combining two consecutive time-slabs into a single one. For a more detailed discussion on how space and time coarsening can be combined, we refer to \cite{HLN:Neumueller:2013a}.

In this work, we are mostly interested in the smoother, which is of (inexact) damped block Jacobi type, i.e.,
\begin{align*}
	 u_h^{k+1} = u_h^{k} + \omega \mathbf{D}_h^{-1} \left[ f_h - \mathbf{L}_h u_h^k \right] \quad \text{for } k=1,2,\ldots.
\end{align*}
We use the block diagonal matrix $\mathbf{D}_h := \text{diag}\{ \mathbf{A}_n \}_{n=1}^N$ and the damping parameter $\omega = \frac{1}{2}$, see also \cite{HLN:GanderNeumueller:2016a}. The application of the smoother can be accelerated by replacing the inverse of $\mathbf{D}_h$ by some approximation, i.e., an approximation $\hat{\mathbf{A}}_n^{-1}$ to  $\mathbf{A}_n^{-1}$. The aim of this work is to find a procedure, which allows an efficient application 
of $\hat{\mathbf{A}}_n^{-1}$ to a vector.
In order to achieve this, we will heavily exploiting the 
special tensor structure of $\mathbf A_n$.

\subsection{General construction of an approximation for $\mathbf A_n^{-1}$}
\label{sec:constructionAinv}

In this section, for notational simplicity, we drop the subscript $n$ when considering matrices and vectors defined on the space-time slice $Q_n$. 
We recall the structure of the matrix $\mathbf A$, 
\begin{align*}
	\mathbf A = \mathbf K_t \otimes \mathbf M_x + \mathbf M_t \otimes \mathbf K_x,
\end{align*}
where the matrices $\mathbf M_x$ and $\mathbf K_x$ are symmetric and positive definite, 
while the matrices $\mathbf K_t$ and  $\mathbf M_t$ are non-symmetric, cf. \eqref{equ:structureAn}. 
The matrices $\mathbf M_x$ and $\mathbf K_x$ correspond to $d$-dimensional problem, 
whereas $\mathbf K_t$ and  $\mathbf M_t$  
are only 
related 
to a one dimensional problem 
in one time-slice. 
Hence, the size of the latter two matrices is much smaller than the first two. 
The idea is to use already available preconditioners for symmetric positive definite problems of the form $\mathbf K_x+ \gamma \mathbf M_x$ with $\gamma >0$ to construct efficient and robust preconditioners for $\mathbf A^{-1}$.
The ideas of this section are based on the results developed in \cite{HLN:Tani:2017a} and  \cite{HLN:SangalliTani:2016a}.

We will achieve this by performing a decomposition of ${\mathbf{M}}_{t}^{-1}{\mathbf{K}}_{t}$ using one of the three following methods: \emph{Diagonalization}, \emph{Complex-Schur decomposition}, \emph{Real-Schur decomposition}. 
We obtain a decomposition of the form 
${\mathbf{M}}_{t}^{-1}{\mathbf{K}}_{t}= \mathbf X^{-1} \mathbf Z \mathbf X$, 
where the entries of the matrices $\mathbf X$ and $\mathbf Z$ are complex or real numbers, and $\mathbf Z$ has some sort of ``simple'' structure. A detailed specification will be presented in \secref{sec:diagonalization}, \secref{sec:complexSchur} and \secref{sec:realSchur}.

By defining $\mathbf Y:= (\mathbf M_t\mathbf X)^{-1}$, we obtain the following representations
 \begin{align*}
 	\mathbf M_t = \mathbf Y^{-1}\mathbf X^{-1} \quad \text{and}\quad \mathbf K_t= \mathbf Y^{-1}\mathbf Z \mathbf X^{-1}.
 \end{align*}
Now we can rewrite $\mathbf A$ in the  form
\begin{align*}
	 \mathbf{A} &= {\mathbf{K}}_{t} \otimes {\mathbf{M}}_{x} +{\mathbf{M}}_{t} \otimes {\mathbf{K}}_{x}\\
	 &=(\mathbf Y^{-1}\mathbf Z \mathbf X^{-1}) \otimes {\mathbf{M}}_{x} +(\mathbf Y^{-1}\mathbf X^{-1}) \otimes {\mathbf{K}}_{x}\\
	 &=  (\mathbf Y^{-1}\otimes\mathbf I)\cdot(\mathbf Z\otimes \mathbf M_x + \mathbf I\otimes \mathbf K_x)\cdot(\mathbf X^{-1}\otimes \mathbf I).
\end{align*}
Using the well-known fact that 
$(\mathbf Y^{-1}\otimes\mathbf I)^{-1} = \mathbf Y \otimes\mathbf I$
and 
$(\mathbf X^{-1}\otimes\mathbf I)^{-1} = \mathbf X \otimes\mathbf I$,
we obtain
\begin{align}
   \label{equ:ST:generalDecompA}
	 \mathbf{A}^{-1} = (\mathbf X\otimes \mathbf I)\cdot(\mathbf Z\otimes \mathbf M_x + \mathbf I\otimes \mathbf K_x)^{-1}\cdot(\mathbf Y\otimes\mathbf I).
\end{align}
In the subsequent subsections, we will investigate the structure of the matrix $(\mathbf Z\otimes \mathbf M_x + \mathbf I\otimes \mathbf K_x)$ for each of the decomposition methods, and we will look for efficient ways of (approximate) inversion.

In the following, the generalized eigenvalues $\lambda_i := \alpha_i+\imath\beta_i \in\mathbb{C}$ of $(\mathbf K_t,\mathbf M_t)$, i.e.,
\begin{align}
\label{equ:ST:generalizedEigenvalue}
	\mathbf K_t \mathbf z_i = \lambda_i \mathbf M_t\mathbf z_i,
\end{align}
with the eigenvector
$\mathbf z := \mathbf x + \imath \mathbf y$,
will play an important role for constructing an efficient application of \eqref{equ:ST:generalDecompA}. 
First of all, for $0 < \theta_n\leq C_{inv}^{-2}$, 
where $C_{inv}$ denotes the constant from the inverse inequality
\begin{align}
\label{equ:trace1D}
	|v(t_{n-1})|^2\leq C_{inv}^2h_n^{-1}\|v\|^2_{L_2(t_{n-1},t_n)} 
	\;\forall v \leftrightarrow \mathbf v\in\mathbb{R}^{N_t},
\end{align}
we have the positiveness of the matrices $\mathbf K_t$ and $\mathbf M_t$, 
see \cite{HLN:WarburtonHesthaven:2003} for an explicit formula of $C_{inv}= C_{inv}(p)$ in the case of polynomials of the degree $p$.
\begin{lemma}
\label{lem:pos_KM}
Let $\mathbf K_t$ and $\mathbf M_t$ 
 be given by
\eqref{equ:TimeMatrices}, 
and let the constant $C_{inv} >0$ be defined according to \eqref{equ:trace1D}. 
If $\theta_n > 0$, then the matrix $\mathbf K_t$ is positive, i.e., 
$\mathbf v^T\mathbf K_t\mathbf v>0$ for all 
$\mathbf v\in\mathbb{R}^{N_t} \setminus \{\mathbf 0\}$, 
and if $\theta_n <  2C_{inv}^{-2}$, then the matrix $\mathbf M_t$ is positive.
\end{lemma}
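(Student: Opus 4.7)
The plan is to use the standard trick of passing from the matrix form to the function level: given $\mathbf v \in \mathbb R^{N_t}$ write $v = \sum_{i} v_i \psi^n_i$ and compute the quadratic forms $\mathbf v^T \mathbf K_t \mathbf v$ and $\mathbf v^T \mathbf M_t \mathbf v$ directly from the definitions in \eqref{equ:TimeMatrices}. Since the basis $\{\psi^n_i\}$ spans the space of admissible $v$, positivity of the matrix is equivalent to strict positivity of the quadratic form for every $v \not\equiv 0$.

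For $\mathbf K_t$: expanding and splitting the integrand, one finds
\begin{equation*}
  \mathbf v^T \mathbf K_t \mathbf v
  = \int_{t_{n-1}}^{t_n} \partial_t v\, v \, dt
  + \theta_n h_n \int_{t_{n-1}}^{t_n} (\partial_t v)^2 \, dt
  + v(t_{n-1})^2 .
\end{equation*}
The first term equals $\tfrac12\bigl(v(t_n)^2 - v(t_{n-1})^2\bigr)$ by the fundamental theorem of calculus, so the identity collapses to
\begin{equation*}
  \mathbf v^T \mathbf K_t \mathbf v
  = \tfrac12 v(t_n)^2 + \tfrac12 v(t_{n-1})^2 + \theta_n h_n \|\partial_t v\|^2_{L_2(t_{n-1},t_n)} .
\end{equation*}
For any $\theta_n>0$ every term is non-negative, and vanishing forces $\partial_t v \equiv 0$ together with $v(t_{n-1})=0$, i.e.\ $v \equiv 0$, so $\mathbf v = 0$. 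This proves positivity of $\mathbf K_t$.

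For $\mathbf M_t$: the same kind of computation gives
\begin{equation*}
  \mathbf v^T \mathbf M_t \mathbf v
  = \|v\|^2_{L_2(t_{n-1},t_n)} + \tfrac{\theta_n h_n}{2}\bigl(v(t_n)^2 - v(t_{n-1})^2\bigr) .
\end{equation*}
Here the boundary terms may hurt, so I would drop the non-negative contribution at $t_n$ and then absorb $v(t_{n-1})^2$ into the $L_2$ norm using the inverse inequality \eqref{equ:trace1D}:
\begin{equation*}
  \mathbf v^T \mathbf M_t \mathbf v
  \ge \|v\|^2_{L_2(t_{n-1},t_n)} - \tfrac{\theta_n h_n}{2} v(t_{n-1})^2
  \ge \Bigl(1 - \tfrac{\theta_n C_{inv}^2}{2}\Bigr) \|v\|^2_{L_2(t_{n-1},t_n)} .
\end{equation*}
Under the assumption $\theta_n < 2 C_{inv}^{-2}$ the factor in front is strictly positive, so the right-hand side is $>0$ whenever $v \not\equiv 0$, proving positivity of $\mathbf M_t$.

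The only place where any care is needed is the $\mathbf M_t$ estimate: one must correctly identify which boundary value the inverse trace inequality has to control (the lower endpoint $t_{n-1}$, not $t_n$) and ensure the constant $1 - \theta_n C_{inv}^2 / 2$ is strictly positive, matching the stated threshold $\theta_n < 2 C_{inv}^{-2}$. The $\mathbf K_t$ part is essentially a one-line integration-by-parts identity and requires no auxiliary inequality.
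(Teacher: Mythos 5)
Your proof is correct and follows essentially the same route as the paper: pass from the matrix quadratic forms to the corresponding function-level integrals, use the fundamental theorem of calculus to turn $\int v' v$ into boundary values for $\mathbf K_t$, and control the $t_{n-1}$ boundary term in $\mathbf M_t$ via the inverse trace inequality \eqref{equ:trace1D}. The only cosmetic difference is that the paper keeps the non-negative $\tfrac12\theta_n h_n |v(t_n)|^2$ term in the final lower bound for $\mathbf M_t$ while you discard it early; both give the required strict positivity under $\theta_n < 2 C_{inv}^{-2}$.
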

\begin{proof}
We first consider the matrix $\mathbf K_t$. 
We can write $\mathbf v^T \mathbf K_t \mathbf v$ in the following way:
 	\begin{align*}
	  \mathbf v^T\mathbf K_t\mathbf v &= (\mathbf K_t \mathbf v, \mathbf v) 
	  = \int_{t_{n-1}}^{t_n} (v'(t)v(t)+\theta_nh_n(v'(t))^2)\,dt + |v(t_{n-1})|^2 \\
					  &= \theta_n h_n\|v'\|_{L_2(t_{n-1},t_n)}^2 + \frac{1}{2}\int_{t_{n-1}}^{t_n} (v^2)'(t)\,dt + |v(t_{n-1})|^2 \\
					  &= \theta_n h_n\|v'\|_{L_2(t_{n-1},t_n)}^2 + \frac{1}{2}|v(t_{n})|^2 -\frac{1}{2}|v(t_{n-1})|^2 +|v(t_{n-1})|^2\\
					  &= \theta_n h_n\|v'\|_{L_2(t_{n-1},t_n)}^2 + \frac{1}{2}(|v(t_{n})|^2+|v(t_{n-1})|^2) >0.
 	\end{align*}
for all $v \leftrightarrow \mathbf v\in \mathbb{R}^{N_t} \setminus \{\mathbf 0\}$.
Using \eqref{equ:trace1D}, we similarly obtain
\begin{align*}
	  \mathbf v^T\mathbf M_t\mathbf v &= (\mathbf M_t \mathbf v, \mathbf v) 
	  = \int_{t_{n-1}}^{t_n} (v(t)^2+\theta_nh_n v'(t) v(t))\,dt\\
					  &= \|v\|_{L_2(t_{n-1},t_n)}^2 + \frac{1}{2}\theta_n h_n(|v(t_{n})|^2 -|v(t_{n-1})|^2) \\
					  &\geq \left(1-\frac{C_{inv}^{2}\theta_n}{2}\right)\|v\|_{L_2(t_{n-1},t_n)}^2 +\frac{1}{2}\theta_n h_n|v(t_{n})|^2 >0.
\end{align*}
for all $v \leftrightarrow \mathbf v\in \mathbb{R}^{N_t} \setminus \{\mathbf 0\}$. 
$\hfill \square$
 \end{proof}
Next we 
are going 
to investigate the generalized eigenvalues in \eqref{equ:ST:generalizedEigenvalue}. 
More precisely,
we want to find conditions 
under which
the real part $\alpha$ is positive. 
However, for a 
generalized eigenvalue problem
$\mathbf A \mathbf z = \lambda \mathbf B \mathbf z$, this does not follow from the positivity of $\mathbf A$ and $\mathbf B$ 
as following example shows.
\begin{example}
	Let the matrices $\mathbf{A}$ and $\mathbf B$ be given by
	\begin{align*}
		\mathbf{A} = \MatTwo{5}{-2}{13}{18} \quad \text{and} \quad \mathbf B = \MatTwo{4}{10}{-10}{9}.
	\end{align*}
	For the spectra, 
	we have
	$\sigma(\mathbf A) = \{9\pm 2\sqrt5\imath\}$ and $\sigma(\mathbf{B}) = \{\frac{13}{2}\pm5\sqrt{15}\imath\}$. However, the generalized eigenvalues are $\sigma(\mathbf B^{-1} \mathbf A)=\{-\frac{103}{272}\pm\sqrt{4435}\imath\}$.
\end{example}

Let $\mathbf z$ be the eigenvector to the eigenvalue $\lambda=\alpha+\imath \beta$, i.e.,  $(\mathbf A - \lambda\mathbf B)\mathbf z = 0$. 
Multiplying from the left with $(\mathbf x- \imath\mathbf y)^T$ 
yields
\begin{align*}
	(\mathbf x- \imath\mathbf y)^T(\mathbf A - (\alpha+\imath \beta)\mathbf B)\mathbf (\mathbf x+ \imath\mathbf y) = 0.
\end{align*}
Separating the real and imaginary part, we obtain
\begin{align}
\label{equ:genEVLinerSystem}
\begin{split}
	\alpha (\mathbf x^T \mathbf B\mathbf x + \mathbf y^T \mathbf B\mathbf y) - \beta (\mathbf x^T( \mathbf B - \mathbf B^T) \mathbf y) &= \mathbf x^T \mathbf A\mathbf x + \mathbf y^T \mathbf A\mathbf y \\
	\alpha (\mathbf x^T( \mathbf B -\mathbf  B^T) \mathbf y) + \beta (\mathbf x^T \mathbf B\mathbf x + \mathbf y^T \mathbf B\mathbf y) &= \mathbf x^T( \mathbf A - \mathbf A^T) \mathbf y.
\end{split}
\end{align}
Introducing the abbreviations
$a :=  \mathbf x^T \mathbf A\mathbf x + \mathbf y^T \mathbf A\mathbf y$, $b :=\mathbf x^T \mathbf B\mathbf x + \mathbf y^T \mathbf B\mathbf y$, $c := \mathbf x^T( \mathbf B - \mathbf B^T) \mathbf y$ and $d := \mathbf x^T( \mathbf A - \mathbf A^T) \mathbf y$, 
we can rewrite this system in the compact 
form
\begin{align*}
	\MatTwo{b}{-c}{c}{b}\VecTwo{\alpha}{\beta}=\VecTwo{a}{d},
\end{align*}
and $\alpha$ is then given by the formula
\begin{align}
\label{equ:formulaMu_general}
	\alpha = \frac{1}{b^2+c^2}(ab  + cd).
\end{align}
We can easily observe the statements of the following lemma.
\begin{lemma}
\label{lem:EV_Basicproperties}
	Let $\mathbf A$ and $\mathbf B$ be positive matrices, then the following statements hold:
	\begin{enumerate}
		\item $a>0$ and $b>0$
		\item If $\beta=0$, i.e., the eigenvalue $\lambda\in\mathbb{R}$, then $\lambda = \alpha>0$.
		\item If either $\mathbf A$ or $\mathbf B$ are symmetric, then $\alpha>0$.
	\end{enumerate}
	If $\mathbf A$ is only non-negative, then these inequalities hold with 
	$\geq$ instead of $>$.
\end{lemma}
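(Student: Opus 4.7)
The plan is to read off each of the three claims directly from the algebraic identities set up just before the lemma, namely the linear system \eqref{equ:genEVLinerSystem} and the closed form \eqref{equ:formulaMu_general}. The only non-definitional input needed is that $\mathbf{z} = \mathbf{x} + \imath \mathbf{y}$ is a (nonzero) eigenvector, so at least one of the real vectors $\mathbf{x}, \mathbf{y}$ is nonzero.

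For item 1, I would simply invoke positivity of $\mathbf{A}$ and $\mathbf{B}$: for any $\mathbf{v} \in \mathbb{R}^{N}$ one has $\mathbf{v}^{T}\mathbf{A}\mathbf{v} \geq 0$ with equality only when $\mathbf{v} = 0$ (and similarly for $\mathbf{B}$). Adding the contributions from $\mathbf{x}$ and $\mathbf{y}$ and using that they cannot both vanish yields $a > 0$ and $b > 0$. For item 2, if $\beta = 0$, the first equation of \eqref{equ:genEVLinerSystem} collapses to $\alpha b = a$, so $\alpha = a/b > 0$ by item 1 (note that this route avoids any question about whether the eigenvector itself is real). For item 3, symmetry of $\mathbf{B}$ forces $c = \mathbf{x}^{T}(\mathbf{B} - \mathbf{B}^{T})\mathbf{y} = 0$, whence \eqref{equ:formulaMu_general} reduces to $\alpha = ab/b^{2} = a/b > 0$; symmetry of $\mathbf{A}$ instead forces $d = 0$, and \eqref{equ:formulaMu_general} reduces to $\alpha = ab/(b^{2}+c^{2})$, which is strictly positive since the denominator is at least $b^{2} > 0$.

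For the final sentence of the lemma, I would observe that weakening $\mathbf{A}$ from positive to non-negative only changes the first inequality in item 1 from $a > 0$ to $a \geq 0$; since $\mathbf{B}$ remains positive, $b > 0$ and $b^{2} + c^{2} > 0$ are preserved, so every strict inequality ``$> 0$'' above becomes the weak inequality ``$\geq 0$'' by exactly the same computations.

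Honestly, there is no genuine obstacle here: the lemma is a bookkeeping check, and all the heavy lifting (the derivation of \eqref{equ:genEVLinerSystem} and \eqref{equ:formulaMu_general}) has already been done in the preceding paragraphs. The only place where a reader could get confused is in item 2, where one should use the first line of \eqref{equ:genEVLinerSystem} rather than \eqref{equ:formulaMu_general} directly, since otherwise one has to separately argue that $b^{2} + c^{2} \neq 0$; this is a minor stylistic point rather than a mathematical difficulty.
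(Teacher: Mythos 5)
Your proof is correct and follows essentially the same route as the paper: item 1 from the definitions and positivity, item 2 from the first equation of \eqref{equ:genEVLinerSystem} with $\beta=0$, item 3 from the vanishing of $c$ or $d$ in \eqref{equ:formulaMu_general}, and the weak-inequality variant by the same computations. The extra remarks you add (why $a,b>0$ requires only that one of $\mathbf{x},\mathbf{y}$ be nonzero, and that using the first line of \eqref{equ:genEVLinerSystem} for item 2 sidesteps checking $b^2+c^2\neq 0$) are sound elaborations of the paper's terser argument rather than a different approach.
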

\begin{proof}
	The positivity of $a$ and $b$
	immediately
	follows 
	from the definition. If the eigenvalue $\lambda$ is real, i.e., $\beta=0$, we obtain from the first equation of \eqref{equ:genEVLinerSystem} that 
	$\alpha = a/b>0$. 
	If either $\mathbf A$ or $\mathbf B$ is symmetric, then either $d$ or $c$
	is zero. 
	Hence, by \eqref{equ:formulaMu_general}, $\alpha$ is positive.
 	$\hfill \square$
	\end{proof}

Let us  now consider  the special case of $\mathbf A = \mathbf K_t$ and $\mathbf B = \mathbf M_t$. For notational simplicity, we drop the subscript $n$, and consider the interval $[0,T]$. 
First we observe that 
\begin{align*}
	c &= \mathbf x^T( \mathbf B - \mathbf B^T) \mathbf y = \theta h \int_0^Ty'(t)x(t)-x'(t)y(t)\,dt\\
	d &= \mathbf x^T( \mathbf A - \mathbf A^T) \mathbf y = \int_0^Tx'(t)y(t)-y'(t)x(t)\,dt.
\end{align*}
Hence, it follows that $c = -\theta h d$. 
This relation leads to the following formula for $\alpha$:
\begin{align}
	\label{equ:formulaMu_specific}
	\alpha = \frac{1}{b^2+c^2}(ab-\theta h d^2).
\end{align}
The problem then reduces to check the relation $ab - \theta h d^2>0$, 
which then reads as
\begin{align}
\label{equ:formulaMu_specific2}
	(\mathbf x^T \mathbf A\mathbf x + \mathbf y^T \mathbf A\mathbf y)(\mathbf x^T \mathbf B\mathbf x + \mathbf y^T \mathbf B\mathbf y)-\theta h\mathbf 
	(x^T( \mathbf A - \mathbf A^T) \mathbf y)^2
	> 0,
\end{align}
for the eigenvector $\mathbf z = \mathbf x+\imath \mathbf y$ corresponding to $\lambda = \alpha + \imath \beta$. 
Rewriting \eqref{equ:formulaMu_specific2} in terms of functions, 
we get the relation
\begin{align*}
&\left(\theta_n h_n\|x'\|^2 + \frac{1}{2}(|x(T)|^2+|x(0)|^2)+\theta_n h_n\|y'\|^2 + \frac{1}{2}(|y(T)|^2+|y(0)|^2)\right)\\
&\cdot\left(\|x\|^2 + \frac{1}{2}\theta_n h_n(|x(T)|^2 -|x(0)|^2)+\|y\|^2 + \frac{1}{2}\theta_n h_n(|y(T)|^2 -|y(0)|^2)\right)\\
 & -\theta h\left(\int_0^Tx'(t)y(t)-y'(t)x(t)\,dt\right)^2>0
\end{align*}
Unfortunately, in this work, we cannot give a complete characterization of the conditions under which the last inequality holds.

Let us consider the special case $\theta=0$. First of all, we note that 
	$\mathbf v^T\mathbf K_t\mathbf v= \frac{1}{2}(|v(t_{n-1})|^2 + |v(t_{n})|^2),$
which then only defines a seminorm. Hence, discrete coercivity is not valid. 
Therefore, this case is not covered by the analysis presented in \cite{HLN:HoferLangerNeumuellerToulopoulos:2017a}. For its analysis, we refer to \cite{HLN:Steinbach:2015a}, where an inf-sup condition and error estimates are proven.
The matrix $\mathbf M_t$ is symmetric and $\mathbf v^T\mathbf M_t\mathbf v = \|v\|^2_{L^2}$. From this fact, we can deduce the following statement by means of \lemref{lem:EV_Basicproperties}:
\begin{proposition}
\label{prop:theta0}
	Let $\mathbf K_t$ and $\mathbf M_t$ be as defined above with $\theta = 0$. 
	Then $\alpha\geq0$.
\end{proposition}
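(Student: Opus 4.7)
The plan is to apply Lemma \ref{lem:EV_Basicproperties} directly, with the understanding that in the case $\theta=0$ the matrix $\mathbf{K}_t$ is only non-negative (so the inequality $\alpha > 0$ from item 3 relaxes to $\alpha \geq 0$).

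First I would verify the two relevant structural properties at $\theta=0$. From the computation preceding \lemref{lem:pos_KM}, specializing to $\theta=0$ gives
\begin{equation*}
\mathbf v^T\mathbf K_t\mathbf v = \tfrac{1}{2}\bigl(|v(t_{n-1})|^2+|v(t_n)|^2\bigr)\ge 0
\quad\text{and}\quad
\mathbf v^T\mathbf M_t\mathbf v = \|v\|_{L_2(t_{n-1},t_n)}^2 > 0
\end{equation*}
for every nonzero $\mathbf v\in\mathbb{R}^{N_t}$. Hence $\mathbf K_t$ is non-negative (it defines only a seminorm, so strict positivity fails) while $\mathbf M_t$ is positive. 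Moreover, with $\theta=0$ the definition \eqref{equ:TimeMatrices} of $\mathbf M_t$ reduces to the symmetric Gram matrix of the basis, so $\mathbf M_t = \mathbf M_t^T$.

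Next I would plug this into the general formula \eqref{equ:formulaMu_general}. Symmetry of $\mathbf B = \mathbf M_t$ immediately yields $c = \mathbf x^T(\mathbf B-\mathbf B^T)\mathbf y = 0$, so
\begin{equation*}
\alpha \;=\; \frac{1}{b^2+c^2}(ab+cd) \;=\; \frac{a}{b}.
\end{equation*}
By the previous step, $b = \mathbf x^T\mathbf M_t\mathbf x + \mathbf y^T\mathbf M_t\mathbf y > 0$ (since $(\mathbf x,\mathbf y)\ne(\mathbf 0,\mathbf 0)$ for a genuine eigenvector), and $a = \mathbf x^T\mathbf K_t\mathbf x + \mathbf y^T\mathbf K_t\mathbf y \geq 0$ by non-negativity of $\mathbf K_t$. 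Therefore $\alpha \geq 0$, which is exactly the claim.

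There is no real obstacle: the proof is essentially a one-line specialization of \lemref{lem:EV_Basicproperties}(3), with the only subtlety being that the strict positivity of $\mathbf K_t$ used in that lemma is lost when $\theta=0$, so the conclusion must be weakened from $\alpha>0$ to $\alpha\geq 0$. This is precisely the clause in the last sentence of \lemref{lem:EV_Basicproperties} (``If $\mathbf A$ is only non-negative, then these inequalities hold with $\geq$ instead of $>$''), so invoking it concludes the proof.
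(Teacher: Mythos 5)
Your proof is correct and follows essentially the same route as the paper: the paper's ``proof'' is the short discussion immediately preceding the proposition, which notes that for $\theta=0$ one has $\mathbf v^T\mathbf K_t\mathbf v=\tfrac{1}{2}(|v(t_{n-1})|^2+|v(t_n)|^2)$ (so $\mathbf K_t$ is only non-negative) while $\mathbf M_t$ is symmetric with $\mathbf v^T\mathbf M_t\mathbf v=\|v\|_{L_2}^2>0$, and then invokes Lemma~\ref{lem:EV_Basicproperties}(3) together with its final ``non-negative'' clause. Your write-up spells out the same specialization, including the observation that $c=0$ by symmetry of $\mathbf M_t$, so there is no gap.
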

\begin{remark}
\label{rem:EV_realPart_1}
	In the condition number analysis of the following subsections, we consider matrices of the form $\mathbf K_x + \alpha \mathbf M_x$, which are required to be positive definite. Therefore, the positivity of $\alpha$ can be relaxed in the case that $|\Gamma_D|>0$.
\end{remark}

\begin{remark}
\label{rem:EV_realPart_2}
	A more detailed investigation of \eqref{equ:genEVLinerSystem} shows that
	\begin{align}
		\alpha = 0 \Longleftrightarrow x(0)=x(T)=y(0)=y(T) = 0,
	\end{align}
	for the eigenvector $\mathbf z = \mathbf x + \imath \mathbf y$ corresponding to $\alpha+\imath \beta$.
	
	For the case $p=1$, one can even show that for an eigenvector corresponding to an purely imaginary eigenvalue the property $x(0)=x(T)=y(0)=y(T) = 0$ cannot hold. Considering a uniform knot vector in $[0,1]$ with B-Splines of degree $p=1$ and $N_t\geq 3$, it holds
	\begin{align*}
		\mathbf K_t = \frac{1}{2}\begin{bmatrix}
		1 & 1\\
-1 & 0 & 1\\
 & \ddots & \ddots & \ddots\\
 &  & -1 & 0 & 1\\
 &  &  & -1 & 0 & 1\\
 &  &  &  & -1 & 1
		                         \end{bmatrix} \quad \text{and}\quad
			\mathbf M_t = C_n\begin{bmatrix}
2 & 1\\
1 & 4 & 1\\
 & \ddots & \ddots & \ddots\\
 &  & 1 & 4 & 1\\
 &  &  & 1 & 4 & 1\\
 &  &  &  & 1 & 2
		                         \end{bmatrix},                         
	\end{align*}
	where $C_n>0$ depends on $N_t$.
Rewriting $\mathbf K_t \mathbf z = \imath\beta \mathbf M_t \mathbf z$ as recurrence relation for $\mathbf z=[z_1,z_2,\ldots,z_{N_t-1},z_{N_t}]$, we obtain
\begin{align}
\label{equ:recurrenceP1}
\begin{split}
	z_1 + z_2 &= \imath\beta (2z_1 + z_2)\\
	-z_{i-1} + z_{i+1} &= \imath\beta (z_{i-1}+ 4z_{i} + z_{i+1}) \quad i=2,\ldots,N_t-1\\
	-z_{N_t-1} + z_{N_t} &= \imath\beta (z_{N_t-1} + 2z_{N_t}),
	\end{split}
\end{align}
where we put the real number $C_n$ and the $1/2$ in front of $\mathbf K_t$ into the eigenvalue $\imath\beta$.  
In order for $\mathbf z=[0,z_2,\ldots,z_{N_t-1},0]$ to be an eigenvector, we obtain from the first
line of \eqref{equ:recurrenceP1}
\begin{align*}
	z_2 = \imath\beta z_2 \Leftrightarrow (1-\imath\beta)z_2=0.
\end{align*}
Since $(1-\imath\beta)$ cannot be zero, the only possibility for this equation to hold is when $z_2 =0$. Considering now the second line of \eqref{equ:recurrenceP1} and assuming $z_1 =\ldots = z_{j} =0$,
then, for $i=j$,
the equation reads
\begin{align*}
	z_{j+1} = \imath\beta z_{j+1} \Leftrightarrow (1-\imath\beta)z_{j+1}=0.
\end{align*}
Therefore, $z_{j+1}=0$.
By induction it follows that $z=0$. 
Hence, it cannot be an eigenvector.

		In the case of $p>1$, the matrices $\mathbf K_t$ and $\mathbf M_t$ have more than one off diagonal and such a relation would not follow so easily.
Numerical experiments in Section~\ref{sec:smallEVMtKt} indicate 
that the real part of $\lambda$ is positive 
for the case $p>1$ too.
\end{remark}

\begin{remark}
Let us consider the case $|\Gamma_D|>0$. From Remark~\ref{rem:EV_realPart_1}, Proposition~\ref{prop:theta0} and the continuous dependence of $\alpha$ on $\theta$, we obtain 
that
       $\mathbf K_x + \alpha \mathbf M_x$ 
must be positive for sufficiently small $\theta$. 
\end{remark}
\begin{remark}
Numerical experiments for various values of $\theta,p$ and $h_n$ in Section~\ref{sec:smallEVMtKt} indicate
that the generalized eigenvalues $\lambda_i$ have a positive real part $\alpha$
provided that the real part of the eigenvalues of $\mathbf M_t$ is positive.
Moreover, in the practical implementation, 
one has to compute the eigenvalues $\lambda_i$ anyway.
Therefore,
we always have an a posteriori control on the positivity of $\alpha$.
If it happens that $\alpha\leq0$, than we have to use a smaller $\theta$. 
\end{remark}

\subsection{Diagonalization}
\label{sec:diagonalization}

If the matrix ${\mathbf{M}}_{t}^{-1}{\mathbf{K}}_{t}$ is diagonalizable,  
the eigenvalue decomposition allows us to write 
\begin{align}
\label{equ:ST:diagDecompA}
	{\mathbf{M}}_{t}^{-1}{\mathbf{K}}_{t}= \mathbf X^{-1} \mathbf D \mathbf X,
\end{align}
where $\mathbf D = \text{diag}(\lambda_i),\, \lambda_i\in\mathbb{C}$, is a diagonal matrix with possibly 
complex eigenvalues on the diagonal, 
and $\mathbf X \in \mathbb{C}^{N_t\times N_t}$ denotes the matrix of the possibly complex eigenvectors. 
Due to the fact that the matrix ${\mathbf{M}}_{t}^{-1}{\mathbf{K}}_{t}$ is non-symmetric, the eigenvectors do not form an orthogonal basis, i.e. $X^{-1} \neq X^H$. 
An efficient calculation can be performed by means of solving the generalized eigenvalue problem $\mathbf K_t x = \lambda \mathbf M_t x$. 

Thanks to
\eqref{equ:ST:diagDecompA}, the matrix $(\mathbf Z\otimes \mathbf M_x + \mathbf I\otimes \mathbf K_x)^{-1}$ from  \eqref{equ:ST:generalDecompA} takes the 
form
\begin{align*}
	(\mathbf Z\otimes \mathbf M_x + \mathbf I\otimes \mathbf K_x)^{-1} = (\mathbf D\otimes \mathbf M_x + \mathbf I\otimes \mathbf K_x)^{-1} = \text{diag}_{i=1,\ldots,N_t}((\mathbf K_x + \lambda_i \mathbf M_x)^{-1}).
\end{align*}
Therefore, only $N_t$ problems of the form $(\mathbf K_x + \lambda_i \mathbf M_x)$ have to be solved, independently of each other. We have to distinguish two cases: the first case where the eigenvalue $\lambda_i$ is a positive real number, and the second one where $\lambda_i$ is a complex number.

In the first case, we consider $\lambda_i = \alpha_i\in \mathbb R^{+}$. In this case the matrix $\mathbf K_x + \lambda_i \mathbf M_x$ is symmetric positive definite. This allows for many possible exact and inexact solution strategies, e.g., Multigrid, Domain Decomposition type methods. 

The second case, where $\lambda_i = \alpha + \imath\beta \in \mathbb{C}$ with $\alpha,\beta\in\mathbb{R}, \alpha>0$, is more difficult to handle. 
We note that $(\mathbf K_x + \lambda_i \mathbf M_x)^H\neq\mathbf K_x + \lambda_i \mathbf M_x$. 
Separating the real and imaginary parts,  we can rewrite 
the complex system $(\mathbf K_x + \lambda_i \mathbf M_x)z = h$ as a real system 
with a real block system matrix of twice  size.
\begin{align*}
	(\mathbf K_x + \lambda_i \mathbf M_x)z &= h\\ \Longleftrightarrow \MatTwo{\mathbf{K}_x + \alpha_i \mathbf{M}_x } {-\beta \mathbf M_x}{\beta \mathbf M_x }{ \mathbf{K}_x + \alpha_i \mathbf{M}_x}
					 \VecTwo{x}{y} &= \VecTwo{f}{g}\\ \Longleftrightarrow 
					\underbrace{\MatTwo{\mathbf{K}_x + \alpha_i \mathbf{M}_x } {\beta_i \mathbf M_x}{\beta_i \mathbf M_x }{-( \mathbf{K}_x + \alpha_i \mathbf{M}_x)}}_{=:\overline{A}_i} \VecTwo{x}{-y} &= \VecTwo{f}{g},
\end{align*}
where $z = x+\imath y$ and $h = f+\imath g$. 
The matrix $\overline{A}_i\in \mathbb{R}^{2N_x\times 2N_x}$ is symmetric, but indefinite. 
We are now looking for an robust preconditioner for $\overline{A}_i$. In order to construct such a preconditioner, we use operator interpolation technique, see, e.g., \cite{HLN:Zulehner:2011a}, \cite{HLN:BerghLofstrom:1976a} and \cite{HLN:AdamsFournier:2003a}. First, we need the definition of the geometric mean of two operators and the general operator interpolation theorem, see also Definition.~2.28 and Theorem.~2.29 in \cite{HLN:MonikaDiss}.
\begin{definition}
	Let $A$ and $B$  be real, symmetric and positive definite matrices. We define the geometric mean of $A$ and $B$ by the relation
	\begin{align*}
		[A,B]_{1/2} = A^{1/2}\big(A^{-1/2}BA^{-1/2}\big)^{1/2}A^{1/2}.
	\end{align*}
	Moreover, for any $\vartheta\in[0,1]$, we define the symmetric and positive matrix by
	\begin{align*}
		[A,B]_{\vartheta} = A^{1/2}\big(A^{-1/2}BA^{-1/2}\big)^{\vartheta}A^{1/2}.
	\end{align*}
\end{definition}

\begin{theorem}
\label{thm:OperatorInterpolation}
	Let $\mathcal{A}:\mathbb{R}^n\to\mathbb{R}^n$ 
	such that the inequalities
	\begin{align*}
		\underline{c}_0\|u\|_{X_0} \leq \|\mathcal{A} u\|_{Y_0}\leq \overline{c}_0\|u\|_{X_0} \quad\text{and}\quad \underline{c}_1\|u\|_{X_1} \leq \|\mathcal{A} u\|_{Y_1}\leq \overline{c}_1\|u\|_{X_1}\quad \forall u\in\mathbb{R}^n
	\end{align*}
	hold,
	where the linear vector spaces $X_j=\mathbb{R}^n$ and $Y_j=\mathbb{R}^n$ with $j\in\{0,1\}$ are equipped with the norms $\|\cdot\|_{X_j}$ and $\|\cdot\|_{Y_j}$, which are associated to the inner products
	\begin{align*}
		(u,v)_{X_j} = (M_ju,v)_{\ell_2} \quad\text{and}\quad (u,v)_{Y_j} = (N_ju,v)_{\ell_2},
	\end{align*}
	given by the symmetric and positive definite matrices $M_0,M_1,N_0$ and $N_1$, and the euclidean inner product  $(\cdot,\cdot)_{\ell_2}$.
	Then, for 
	$X_\vartheta =[X_0,X_1]_\vartheta$ and $Y_\vartheta=[Y_0,Y_1]_\vartheta$,
	with $\vartheta\in[0,1]$, 
	the inequalities
	\begin{align}
		\label{equ:interpolThm}
		\underline{c}_0^{1-\vartheta}\underline{c}_1^\vartheta\|u\|_{X_\vartheta} \leq\|\mathcal{A}u\|_{Y_\vartheta}\leq \overline{c}_0^{1-\vartheta}\overline{c}_1^\vartheta\|u\|_{X_\vartheta} \quad \forall u\in\mathbb{R}^n.
	\end{align}
	hold, where the norms $\|\cdot\|_{X_\vartheta}$ and $\|\cdot\|_{Y_\vartheta}$ are the norms associated to the inner products
	\begin{align*}
		(u,v)_{X_\vartheta}&=(M_\vartheta u,v)_{\ell_2}, \quad\text{with}\quad M_\vartheta = [M_0,M_1]_\vartheta, \quad\text{and}\\
		(u,v)_{Y_\vartheta}&=( N_\vartheta u,v)_{\ell_2}, \quad\text{with}\quad N_\vartheta = [N_0,N_1]_\vartheta,
	\end{align*}
	respectively.
\end{theorem}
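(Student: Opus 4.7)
The plan is to translate the two pairs of norm inequalities into matrix inequalities, then invoke three fundamental properties of the weighted geometric mean $[\cdot,\cdot]_\vartheta$: monotonicity, positive homogeneity, and congruence invariance. Concretely, the hypothesis $\underline{c}_j\|u\|_{X_j}\le \|\mathcal{A}u\|_{Y_j}\le \overline{c}_j\|u\|_{X_j}$ is equivalent to
\begin{equation*}
\underline{c}_j^{\,2}\, M_j \;\le\; \mathcal{A}^T N_j \mathcal{A} \;\le\; \overline{c}_j^{\,2}\, M_j, \qquad j\in\{0,1\},
\end{equation*}
since $(N_j\mathcal{A}u,\mathcal{A}u)_{\ell_2}=(\mathcal{A}^T N_j\mathcal{A}\, u,u)_{\ell_2}$. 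In particular, $\mathcal{A}^T N_j\mathcal{A}$ is positive definite, so $\mathcal{A}$ is invertible, which is essential for the congruence step below.

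Next, I would collect (or cite) the needed properties of $[\cdot,\cdot]_\vartheta$: (i) joint monotonicity, $0<A_j\le B_j$ implies $[A_0,A_1]_\vartheta\le[B_0,B_1]_\vartheta$; (ii) positive homogeneity, $[\alpha A_0,\beta A_1]_\vartheta = \alpha^{1-\vartheta}\beta^\vartheta [A_0,A_1]_\vartheta$; (iii) congruence invariance, $X^T[A_0,A_1]_\vartheta X=[X^T A_0 X, X^T A_1 X]_\vartheta$ for invertible $X$. Property (ii) is immediate from the defining formula, (i) is the Löwner--Heinz inequality applied to the operator monotone function $s\mapsto s^\vartheta$ on $\vartheta\in[0,1]$, and (iii) is proved by the standard simultaneous diagonalization of two positive definite matrices ($A_0=SS^T$, $A_1=SDS^T$ with $D$ diagonal and $S$ invertible), under which $[A_0,A_1]_\vartheta=SD^\vartheta S^T$ and the congruence $X^T\cdot X$ amounts to replacing $S$ by $X^TS$.

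With these tools in hand, I apply them to the matrix inequalities of Step 1. From $\mathcal{A}^T N_j\mathcal{A}\le\overline{c}_j^{\,2} M_j$ I sandwich by $\mathcal{A}^{-T}$ and $\mathcal{A}^{-1}$ to get $N_j\le \overline{c}_j^{\,2}\,\mathcal{A}^{-T}M_j\mathcal{A}^{-1}$ for $j=0,1$. Applying (i), then (ii), then (iii) with $X=\mathcal{A}^{-1}$, yields
\begin{equation*}
N_\vartheta \;\le\; \bigl[\overline{c}_0^{\,2}\,\mathcal{A}^{-T}M_0\mathcal{A}^{-1},\,\overline{c}_1^{\,2}\,\mathcal{A}^{-T}M_1\mathcal{A}^{-1}\bigr]_\vartheta
\;=\; \overline{c}_0^{\,2(1-\vartheta)}\overline{c}_1^{\,2\vartheta}\,\mathcal{A}^{-T}M_\vartheta\mathcal{A}^{-1},
\end{equation*}
so that $\mathcal{A}^T N_\vartheta\mathcal{A}\le \overline{c}_0^{\,2(1-\vartheta)}\overline{c}_1^{\,2\vartheta} M_\vartheta$. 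Starting from the lower hypothesis $\underline{c}_j^{\,2}M_j\le\mathcal{A}^TN_j\mathcal{A}$ and running the same chain in the reverse direction gives the lower bound $\underline{c}_0^{\,2(1-\vartheta)}\underline{c}_1^{\,2\vartheta} M_\vartheta\le \mathcal{A}^T N_\vartheta\mathcal{A}$. Rewriting both bounds as quadratic forms and taking square roots produces exactly \eqref{equ:interpolThm}.

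The main obstacle is the verification of the congruence property (iii), because the factorization $A^{1/2}(A^{-1/2}BA^{-1/2})^\vartheta A^{1/2}$ involves symmetric square roots rather than the non-symmetric factor $S$ furnished by simultaneous diagonalization. The key bookkeeping step is to check that $R:=(SS^T)^{-1/2}S$ is orthogonal, so that $(A^{-1/2}BA^{-1/2})^\vartheta=RD^\vartheta R^T$ and the symmetric square roots cancel cleanly, leaving $[A,B]_\vartheta=SD^\vartheta S^T$. Once that reduction is in place, homogeneity and monotonicity plug in straightforwardly, and no further estimates are required.
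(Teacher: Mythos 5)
Your proof is correct, but it takes a genuinely different and more self-contained route than the paper, which simply defers to the abstract interpolation theorem (citing Theorem~2.29 of the referenced thesis and Adams--Fournier). You avoid the general complex/real interpolation machinery entirely and give a direct finite-dimensional argument: translate the two-sided norm bounds into the matrix inequalities $\underline{c}_j^{\,2}M_j \leq \mathcal{A}^{T}N_j\mathcal{A}\leq\overline{c}_j^{\,2}M_j$, then propagate them through the weighted geometric mean using joint monotonicity, positive homogeneity and congruence invariance. This is cleaner for the finite-dimensional setting at hand: every step is an elementary matrix manipulation, the invertibility of $\mathcal{A}$ comes for free from $\mathcal{A}^{T}N_j\mathcal{A}\geq\underline{c}_j^{\,2}M_j>0$, and your verification of congruence invariance via the simultaneous factorization $A=SS^{T}$, $B=SDS^{T}$ (with $R=(SS^{T})^{-1/2}S$ checked orthogonal so that $[A,B]_\vartheta=SD^\vartheta S^{T}$) is exactly the bookkeeping detail that is usually glossed over.

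One small imprecision worth tightening: you justify property (i), joint monotonicity $0<A_j\le B_j\Rightarrow[A_0,A_1]_\vartheta\le[B_0,B_1]_\vartheta$, solely by appeal to L\"owner--Heinz. That directly gives monotonicity only in the \emph{second} argument, via $B_1\le B_2\Rightarrow A^{-1/2}B_1A^{-1/2}\le A^{-1/2}B_2A^{-1/2}$ and then $s\mapsto s^{\vartheta}$ operator monotone on $[0,1]$. Monotonicity in the first argument needs an additional ingredient, most economically the symmetry $[A,B]_\vartheta=[B,A]_{1-\vartheta}$ (which the paper records as \eqref{equ:symmAverage}, and which also falls out of your $SD^\vartheta S^{T}$ representation by writing $B=\tilde S\tilde S^{T}$, $A=\tilde S D^{-1}\tilde S^{T}$ with $\tilde S=SD^{1/2}$). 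With that line added, property (i) is fully justified and the rest of your argument closes without further issues.
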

\begin{proof}
	For the proof, we refer to the proof of Theorem~2.29 in \cite{HLN:MonikaDiss} and references therein, see also \cite{HLN:AdamsFournier:2003a}.
	 	$\hfill \square$
\end{proof}
\begin{remark}
Using the notation from Theorem~\ref{thm:OperatorInterpolation},  one can show the 
alternative representation 
\begin{align*}
	\|u\|_{X_{\vartheta}}^2 = \frac{2\sin(\vartheta\pi)}{\pi}
	\int_{0}^{\pi}t^{-(2\vartheta+1)}K(t;u)^2\,dt
\end{align*}
of $\|u\|_{X_{\vartheta}}$,
where $K(t;x) = \inf_{x=x_0+x_1}(\|x_0\|_{X_0}^2+t^2\|x_1\|_{X_1}^2)^{1/2}$. From this representation, one observes that 
 \begin{align}
 \label{equ:symmAverage}
  [X_0,X_1]_{\vartheta} = [X_1,X_0]_{1-\vartheta}.
\end{align}
\end{remark}

Let us consider a general saddle point matrix
	\begin{align*}
		\mathcal{A} = \MatTwo{A}{B}{B^T}{-C},
	\end{align*}
	where $A$ and $C$ are symmetric positive definite matrices. We can define two possible negative Schur complements
	\begin{align}
	\label{equ:generalSchurComplements}
		S := C + BA^{-1}B^T \quad \text{and}\quad R :=A + BC^{-1}B,
	\end{align}
	and the associated block diagonal preconditioners 
	\begin{align*}
		P_0 = \MatTwo{A}{0}{0}{S} \quad \text{and} \quad P_1 = \MatTwo{R}{0}{0}{C}.
	\end{align*}
		For $P_0$ and $P_1$, the following spectral inequalities are known
	\begin{align*}
      	(\sqrt{5} -1)/2 \|u\|_{P_j}\leq \|\mathcal{A}u\|_{P_j^{-1}}\leq (\sqrt{5} +1)/2\|u\|_{P_j} \quad j\in\{0,1\},
        \end{align*}
        see Theorem 2.26 in \cite{HLN:MonikaDiss} and references therein. 
	Based on these two preconditioners, we construct a preconditioner $P_\vartheta$ with $\vartheta=1/2$ by an interpolation of the preconditioners $P_0$ and $P_1$:
	\begin{align*}
		P_{1/2} = [P_0,P_1]_{1/2} = 
		\MatTwo{[A,R]_{1/2}}{0}{0}{[S,C]_{1/2}}.
	\end{align*}
By means of Theorem~\ref{thm:OperatorInterpolation} and the setting $M_0=P_0,M_1=P_1,N_0=P^{-1}_0$ and $N_1=P^{-1}$, it follows that
\begin{align*}
	  (\sqrt{5} -1)/2\|u\|_{P_{1/2}}\leq\|\overline{A}u\|_{P^{-1}_{1/2}}\leq  (\sqrt{5} +1)/2\|u\|_{P_{1/2}}.
\end{align*}
Hence, $\text{cond}_{P_{1/2}}(P^{-1}_{1/2}\mathcal{A})\le (\sqrt5+1)/(\sqrt5-1)$.	
	Note, this condition number estimate would hold for all $\vartheta\in[0,1]$.
	In the following, we are looking for an approximation of $P_{1/2}$,
	which can easily be realized in an implementation.
	
\begin{theorem}
\label{thm:ST:PrecondDiag}
	Let  $K_x$ and $M_x$ be symmetric and positive matrices,
	and let $\alpha$ and $\beta$ be real numbers  with $\alpha >0$. 
	Furthermore, we define the block matrices 
	\begin{align}
		\overline{A} &:= \MatTwo{\mathbf{K}_x + \alpha \mathbf{M}_x } {\beta \mathbf M_x}{\beta \mathbf M_x }{-( \mathbf{K}_x + \alpha \mathbf{M}_x)}, \\
					P &:= 
				 \MatTwo{\mathbf{K}_x + (\alpha+|\beta|) \mathbf{M}_x }{0}{0}{\mathbf{K}_x +  (\alpha+|\beta|) \mathbf{M}_x}.
	\end{align}
	Then 
	the condition number estimate
	\begin{align}
		\text{cond}_{P}(P^{-1}\overline{A}) \leq \sqrt2\frac{\sqrt5+1}{\sqrt5-1}
	\end{align}
	holds.
\end{theorem}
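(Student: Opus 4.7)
My plan is to compare the given block-diagonal preconditioner $P$ with the interpolated saddle point preconditioner $P_{1/2}$ constructed in the text just before the theorem, and to combine the known estimate
$\mathrm{cond}_{P_{1/2}}(P_{1/2}^{-1}\overline A)\le(\sqrt5+1)/(\sqrt5-1)$
with a spectral equivalence between $P$ and $P_{1/2}$ of ratio $\sqrt{2}$. First I would identify the saddle point blocks: with $A:=K_x+\alpha M_x$, $B:=\beta M_x$, $C:=K_x+\alpha M_x$ (so $A=C$), the two negative Schur complements in \eqref{equ:generalSchurComplements} coincide, namely $S=R=A+\beta^2 M_x A^{-1}M_x$. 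Using the symmetry $[X_0,X_1]_{1/2}=[X_1,X_0]_{1/2}$ from \eqref{equ:symmAverage}, this yields
\begin{align*}
P_{1/2}=\begin{bmatrix}[A,S]_{1/2} & 0\\ 0 & [A,S]_{1/2}\end{bmatrix},
\end{align*}
so the whole problem reduces to comparing the single block $[A,S]_{1/2}$ with $T:=K_x+(\alpha+|\beta|)M_x$.

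The next step is a functional-calculus computation: setting $\widetilde M:=A^{-1/2}M_x A^{-1/2}$ (symmetric positive semidefinite), one obtains
\begin{align*}
A^{-1/2}\,[A,S]_{1/2}\,A^{-1/2}=\bigl(I+\beta^{2}\widetilde M^{2}\bigr)^{1/2},\qquad A^{-1/2}\,T\,A^{-1/2}=I+|\beta|\,\widetilde M,
\end{align*}
by the very definition of the geometric mean. Because $\widetilde M$ is diagonalisable with non-negative spectrum, the comparison of these two matrices reduces, eigenvalue by eigenvalue, to the elementary scalar inequality
\begin{align*}
\frac{1+y}{\sqrt2}\le\sqrt{1+y^{2}}\le 1+y\qquad(y\ge0),
\end{align*}
which follows from $(y-1)^{2}\ge 0$ and $(1+y)^{2}\ge 1+y^{2}$. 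Transferring back by conjugation with $A^{1/2}$ gives the two-sided matrix inequality
\begin{align*}
\tfrac{1}{\sqrt2}\,T\;\le\;[A,S]_{1/2}\;\le\;T,
\end{align*}
and hence $\tfrac{1}{\sqrt2}P\le P_{1/2}\le P$.

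Finally I would assemble the condition number estimate. From $c_1 P\le P_{1/2}\le c_2 P$ with $c_1=1/\sqrt2$, $c_2=1$, one deduces $(1/c_2)P^{-1}\le P_{1/2}^{-1}\le (1/c_1)P^{-1}$, and consequently $\sqrt{c_1}\|\overline Au\|_{P_{1/2}^{-1}}\le\|\overline Au\|_{P^{-1}}\le\sqrt{c_2}\|\overline Au\|_{P_{1/2}^{-1}}$ together with $\sqrt{c_1}\|u\|_{P}\le\|u\|_{P_{1/2}}\le\sqrt{c_2}\|u\|_{P}$. Chaining these with the interpolated estimate $\tfrac{\sqrt5-1}{2}\|u\|_{P_{1/2}}\le\|\overline Au\|_{P_{1/2}^{-1}}\le\tfrac{\sqrt5+1}{2}\|u\|_{P_{1/2}}$, obtained from Theorem~\ref{thm:OperatorInterpolation} applied to the endpoint estimates for $P_0$ and $P_1$, the factors $\sqrt{c_2}/\sqrt{c_1}=2^{1/4}$ appear on each side and multiply to $\sqrt 2$, yielding the claimed bound
$\mathrm{cond}_{P}(P^{-1}\overline A)\le\sqrt 2\,(\sqrt5+1)/(\sqrt5-1)$.

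The main obstacle I anticipate is step two: making the comparison $[A,S]_{1/2}\sim T$ rigorous. Once one recognises that $M_x$, $A$, and $A^{-1}M_x A^{-1}$ all commute after the congruence by $A^{-1/2}$, the geometric mean reduces to a simple function of the single self-adjoint operator $\widetilde M$, and the whole computation collapses to the scalar inequality above; identifying this algebraic structure (and confirming that the constant $\sqrt2$ is sharp for this choice of $T$) is the part that takes real thought. The remaining steps are standard manipulations with weighted norms and the operator interpolation theorem stated in Theorem~\ref{thm:OperatorInterpolation}.
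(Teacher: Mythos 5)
Your proposal is correct and follows essentially the same route as the paper's proof: both identify the coinciding Schur complements, compute the geometric mean $[A,S]_{1/2}$ by congruence with $A^{1/2}$ (your reduction to the scalar inequality $\tfrac{1+y}{\sqrt2}\leq\sqrt{1+y^2}\leq 1+y$ for the operator $\widetilde M=A^{-1/2}M_xA^{-1/2}$ is exactly the matrix inequality $\tfrac{1}{\sqrt2}(\sqrt a\,I+\sqrt b\,X^{1/2})\leq(aI+bX)^{1/2}\leq\sqrt a\,I+\sqrt b\,X^{1/2}$ the paper invokes), and then chain the resulting spectral equivalence $\tfrac{1}{\sqrt2}P\leq P_{1/2}\leq P$ with the interpolated $(\sqrt5\pm1)/2$ bounds. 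The only difference is presentational (diagonalization of $\widetilde M$ versus the matrix inequality stated directly), so the argument matches the paper's.
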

\begin{proof}
The proof follows the lines in \cite{HLN:MonikaDiss}, Section 3.3. 
For simplicity, we introduce the notations
$\mathcal K := \mathbf{K}_x + \alpha \mathbf{M}_x$ and $\mathcal M := \mathbf M_x$. Recall the system matrix
	\begin{align*}
		\overline{A} &:= \MatTwo{\mathbf{K}_x + \alpha \mathbf{M}_x } {\beta \mathbf M_x}{\beta \mathbf M_x }{-( \mathbf{K}_x + \alpha \mathbf{M}_x)} = \MatTwo{\mathcal{K} } {\beta \mathcal{M}}{\beta \mathcal{M} }{-\mathcal K}.
	\end{align*}
	Since $\mathcal K$ is symmetric and, due to $\alpha>0$, also positive definite, 
	we can reformulate the two Schur complements from \eqref{equ:generalSchurComplements} for the matrix $\overline A$ as follows:
	\begin{align*}
		S = R =\mathcal{K} + \beta^2\mathcal M \mathcal K^{-1}\mathcal M.
	\end{align*}
	We are looking for an spectral equivalent approximation $P$ of $P_{1/2}$, which is easy to realize and fulfils the spectral inequalities
	\begin{align}
	\label{equ:equivalencePP12}
		\underline{c}P\leq P_{1/2} \leq \overline{c}P,
	\end{align}
	where the constants $\underline{c}$ and $\overline{c}$ are independent of $\alpha$ and $\beta$. Next we estimate $[\mathcal K,R]_{1/2}$ and $[S,\mathcal K]_{1/2}$. 
	Here we make use of the following matrix inequalities
      \begin{align}
      	\frac{1}{\sqrt{2}} (\sqrt a I+ \sqrt b X^{1/2}) \leq (a I + b X)^{1/2} \leq \sqrt a I + \sqrt b X^{1/2},
      \end{align}
      where $X$ is a symmetric positive definite matrix, and $I$ denotes the identity matrix. 
      First we derive an
      upper bound for $[\mathcal K,R]_{1/2}$:
      \begin{align*}
      	[\mathcal K,R]_{1/2} &= \mathcal K^{1/2}\big(\mathcal K^{-1/2}R\mathcal K^{-1/2}\big)^{1/2}\mathcal K^{1/2}  \\
      	& = \mathcal K^{1/2}\big(\mathcal K^{-1/2}(\mathcal{K} + \beta^2\mathcal M \mathcal K^{-1}\mathcal M)\mathcal K^{-1/2}\big)^{1/2}\mathcal K^{1/2}\\
      	&= \mathcal K^{1/2}\big( I + \beta^2\mathcal K^{-1/2}\mathcal M \mathcal K^{-1}\mathcal M \mathcal K^{-1/2}\big)^{1/2}\mathcal K^{1/2}\\
      	&\leq \mathcal K^{1/2}\big( I + (\beta^2\mathcal K^{-1/2}\mathcal M \mathcal K^{-1}\mathcal M \mathcal K^{-1/2})^{1/2}\big)\mathcal K^{1/2}\\
      	&= \mathcal K + |\beta| \mathcal K^{1/2}(\mathcal K^{-1/2}\mathcal M \mathcal K^{-1}\mathcal M \mathcal K^{-1/2})^{1/2}\mathcal K^{1/2}\\
      	&= \mathcal K + |\beta| \mathcal K^{1/2}(\mathcal K^{-1/2}\mathcal M \mathcal K^{-1/2})^{1/2}(\mathcal K^{-1/2}\mathcal M \mathcal K^{-1/2})^{1/2}\mathcal K^{1/2}\\
      	&=\mathcal K + |\beta| \mathcal K^{1/2}(\mathcal K^{-1/2}\mathcal M \mathcal K^{-1/2})\mathcal K^{1/2}\\
	&= \mathcal K + |\beta| \mathcal M.\\
      \end{align*}
      Similarly, for the lower bound, we obtain
            \begin{align*}
      	[\mathcal K,R]_{1/2} &= \mathcal K^{1/2}\big(\mathcal K^{-1/2}R\mathcal K^{-1/2}\big)^{1/2}\mathcal K^{1/2}  \\
      	&= \mathcal K^{1/2}\big( I + \beta^2\mathcal K^{-1/2}\mathcal M \mathcal K^{-1}\mathcal M \mathcal K^{-1/2}\big)^{1/2}\mathcal K^{1/2}\\
      	&\geq \mathcal K^{1/2}  \big( \frac{1}{\sqrt{2}}(I + (\beta^2\mathcal K^{-1/2}\mathcal M \mathcal K^{-1}\mathcal M \mathcal K^{-1/2})^{1/2})\big)\mathcal K^{1/2}\\
      	&= \frac{1}{\sqrt{2}} (\mathcal K + |\beta| \mathcal K^{1/2}(\mathcal K^{-1/2}\mathcal M \mathcal K^{-1}\mathcal M \mathcal K^{-1/2})^{1/2}\mathcal K^{1/2})\\
	&= \frac{1}{\sqrt{2}} (\mathcal K + |\beta| \mathcal M).\\
      \end{align*}
      The missing estimate from above and below for $[S,\mathcal K]_{1/2}$  follow from the fact that $[S,\mathcal K]_{1/2}= [\mathcal K,S]_{1/2}=[\mathcal K,R]_{1/2}$, see \eqref{equ:symmAverage}. Hence, for the preconditioner
      \begin{align*}
      	P:=\MatTwo{\mathcal K + |\beta| \mathcal M}{0}{0}{\mathcal K + |\beta| \mathcal M} = \MatTwo{\mathbf{K}_x + (\alpha+|\beta|) \mathbf{M}_x }{0}{0}{\mathbf{K}_x +  (\alpha+|\beta|) \mathbf{M}_x},
      \end{align*}
      we obtain the spectral constants $\underline{c} = \frac{1}{\sqrt{2}}$ and $\overline{c} = 1$ in \eqref{equ:equivalencePP12}. Finally, we arrive at the estimate
     \begin{align}
      \label{equ:conditionnumberestimate}
      	\text{cond}_{P}(P^{-1}\overline{A})= \|P^{-1}\overline{A}\|_P\|\overline{A}^{-1}P\|_P \leq \sqrt2\|P^{-1}_{1/2}\overline{A}\|_{P_{1/2}}\|\overline{A}^{-1}P_{1/2}\|_{P_{1/2}}\leq \sqrt2\frac{\sqrt5+1}{\sqrt5-1} .
      \end{align}
       	$\hfill \square$
      \end{proof}

      \begin{remark}
      \label{rem:cond_complexEV}
      	The estimate \eqref{equ:conditionnumberestimate} of the condition number 
      	$\text{cond}_{P}(P^{-1}\overline{A})$ can be improved by solving 
      	the generalized eigenvalue problem
      	\begin{align*}
      		\overline{A}\VecTwo{x}{y} = \lambda P\VecTwo{x}{y}
      	\end{align*}
      	directly.
      	Following the procedure outlined in Remark~9 in \cite{HLN:Zulehner:2011a}, see also the proof of Theorem~\ref{thm:ST:PrecondRealSchur}, we find that the generalized eigenvalues 
      	satisfy the estimates
      	\begin{align*}
      		|\lambda_{min}|\geq\frac{1}{\sqrt{2}}\quad \text{and}\quad |\lambda_{max}|\leq 1,
      	\end{align*}
         which leads to 
         the condition number estimate
         $\text{cond}_P(P^{-1}A) \leq \sqrt{2}$.
      \end{remark}

We note that both block-diagonal entries of $P$ are identical, and the matrix $\mathbf{K}_x + (\alpha+|\beta|) \mathbf{M}_x$ is symmetric and positive definite. 
This opens various possibilities for preconditioning 
based on standard techniques for symmetric and positive definite matrices.
The linear system $\overline{A}y = f$ can then be solved, e.g., by means of MinRes preconditioned by $P^{-1}$. We can even use an spectral equivalent approximation $\hat{P}^{-1}$, i.e., $c\hat{P}^{-1}\leq P^{-1}\leq C\hat{P}^{-1}$, with constants $c$ and $C$, independent of $\alpha$ and $\beta$. Moreover, this approach allows for a further parallelization by applying $\mathbf A_n$ in parallel for $n=1,\ldots,N_t$. 

Unfortunately, this approach has a severe drawback. 
Due to the fact that the matrix ${\mathbf{M}}_{t}^{-1}{\mathbf{K}}_{t}$ is non-symmetric, the matrix $\mathbf X$ of eigenvectors is not unitary and, therefore, $\text{cond}(\mathbf X)\neq1$. Actually, numerical tests in \secref{sec:ST_NumTests_X} show that, for large B-Spline degree or small $h_t$, we observe that the condition number $\text{cond}(\mathbf X)\approx 10^{12}$. In that case we cannot correctly apply \eqref{equ:ST:generalDecompA} and the algorithm fails. This problem can be circumvented by using the Complex or Real Schur decomposition, as presented in the subsequent two subsections.

\subsection{Complex Schur decomposition}
\label{sec:complexSchur}
In this section, we investigate an alternative possibility for decomposing ${\mathbf{M}}_{t}^{-1}{\mathbf{K}}_{t}$. The Complex Schur decomposition provides a decomposition of the form 
\begin{align}
\label{equ:ST:CSchurDecompA}
	{\mathbf{M}}_{t}^{-1}{\mathbf{K}}_{t}= \mathbf Q^{*} \mathbf T \mathbf Q,
\end{align}
where $\mathbf Q\in\mathbb{C}^{N_t\times N_t}$ and $\mathbf T\in \mathbb{C}^{N_t\times N_t}$ is a upper triangular matrix with $T_{ii} = \lambda_i$. The advantage of the  (complex) Schur decomposition is the fact that we obtain a unitary matrix $\mathbf Q$. 
Hence, $\text{cond}(\mathbf Q) = 1$, 
but the diagonal matrix $\mathbf D$ in the decomposition \eqref{equ:ST:diagDecompA} 
is now replaced by the upper triangular matrix $\mathbf T$ in the decomposition \eqref{equ:ST:CSchurDecompA},
By means of \eqref{equ:ST:CSchurDecompA}, the matrix $\mathbf Z\otimes \mathbf M_x + \mathbf I\otimes \mathbf K_x$ from  \eqref{equ:ST:generalDecompA} takes the 
form
\begin{align*}
	(\mathbf Z\otimes \mathbf M_x + \mathbf I\otimes \mathbf K_x)^{-1} &= (\mathbf T\otimes \mathbf M_x + \mathbf I\otimes \mathbf K_x)^{-1} \\
	&= 	\begin{bmatrix}
		\mathbf K_x+ T_{11} \mathbf M_x & T_{12} \mathbf M_x &\ldots &\\
		0 & \mathbf K_x+ T_{22} \mathbf M_x & T_{23} \mathbf M_x& \\
		\vdots &0 & \ddots &               T_{N_t N_t-1} \mathbf M_x \\
		0      &\ldots &     0   & \mathbf K_x+ T_{N_t N_t} \mathbf M_x  
	\end{bmatrix}^{-1}\\
	&= 	\begin{bmatrix}
		\mathbf K_x+ \lambda_1 \mathbf M_x & T_{12} \mathbf M_x &\ldots &\\
		0 & \mathbf K_x+ \lambda_2 \mathbf M_x & T_{23} \mathbf M_x& \\
		\vdots &0 & \ddots &               T_{N_t N_t-1} \mathbf M_x \\
		0      &\ldots &     0   & \mathbf K_x+ \lambda_{N_t} \mathbf M_x  
	\end{bmatrix}^{-1}
\end{align*}

The application of 
$(\mathbf T\otimes \mathbf M_x + \mathbf I\otimes \mathbf K_x)^{-1}$
to some vector $f$ can be performed staggered way 
as
presented in Algorithm~\ref{alg:ApplComplexSchur}.

\begin{algorithm}
	\caption{Calculation of $y = (\mathbf T\otimes \mathbf M_x + \mathbf I\otimes \mathbf K_x)^{-1}f$}
   \label{alg:ApplComplexSchur}
   \begin{algorithmic}
   \algblock{Begin}{End}
    \For {$i =N_t,N_t-1\ldots,1$}
        \State $g = f_i$
        \For {$j =i+1,i+2\ldots,N_t$}
	  \State  $g = g- T_{ij}y_j$
	\EndFor
	\State Solve $(\mathbf K_x+ \lambda_i \mathbf M_x)y_i=g$, where $\lambda_i = T_{ii}$.
    \EndFor
   \State \Return $y$
  \end{algorithmic}
\end{algorithm}

In order to solve the linear systems $(\mathbf K_x+ \lambda_i \mathbf M_x)y_i=g, i=1,\ldots,N_t$ in Algorithm~\ref{alg:ApplComplexSchur}, we can use the techniques developed in the previous subsection. This decomposition method allows us to have a well conditioned transformation matrix $Q$, however at the cost that the linear system cannot be solved independently of each other. 
We note that this method and the eigenvalue decomposition require complex arithmetic, which is more expensive than the real one. In the following subsection, we investigate the real Schur decomposition, which eliminates the need for having complex arithmetic.

\subsection{Real Schur decomposition}
\label{sec:realSchur}

In this subsection, we look at the decomposition of ${\mathbf{M}}_{t}^{-1}{\mathbf{K}}_{t}$ by means of the Real Schur decomposition. It provides a decomposition of the form 
\begin{align}
\label{equ:ST:RSchurDecompA}
	{\mathbf{M}}_{t}^{-1}{\mathbf{K}}_{t}= \mathbf Q^{*} \mathbf T \mathbf Q,
\end{align}
where $\mathbf Q\in\mathbb{R}^{N_t\times N_t}$. The matrix $\mathbf T\in \mathbb{R}^{N_t\times N_t}$ is a upper quasi-triangular matrix, i.e., the diagonal consists of $1\times1$ and $2\times2$ blocks. The values of the $1\times1$ blocks correspond to the real eigenvalues,
while the $2\times2$ blocks correspond to the complex eigenvalues of ${\mathbf{M}}_{t}^{-1}{\mathbf{K}}_{t}$. 

By additionally performing a Givens rotation, the $2\times2$ block can be transformed 
to
the structure
 \begin{align*}
 	\mathbf B:= \MatTwo{\alpha}{\beta_1}{\beta_2}{\alpha},
 \end{align*}
 where $\alpha,\beta_1,\beta_2\in\mathbb{R}$ and $\beta_1\neq\beta_2\neq0$. The eigenvalues of this matrix are given by $\alpha \pm\sqrt{\beta_1\beta_2}$. Due to the fact that the eigenvalues have to be complex and the real part has to be positive, we obtain that $\alpha>0$ and $\beta_1$ and $\beta_2$ have different signs. Therefore, we can write the eigenvalues as $\alpha \pm\imath\sqrt{|\beta_1\beta_2|}$.

Using this decomposition, the matrix $\mathbf Z\otimes \mathbf M_x + \mathbf I\otimes \mathbf K_x$ appearing in \eqref{equ:ST:generalDecompA} has a structure, which is similar to that one of the Complex Schur decomposition. 
The corresponding system of linear algebraic equations
can also be again solved in a staggered way as presented in Algorithm~\ref{alg:ApplComplexSchur}. 
One has to adapt the algorithm in such a way that, if the diagonal block is a $2\times2$ block, one has to work with two-block vectors and a $2\times2$ block matrix. 
It remains to investigate the solution strategy for the $2\times2$ block matrix. As already mentioned, the $2\times2$ block of $T$ is non-symmetric. 
Hence, the $2\times2$ block matrix is also non-symmetric and is given in the following way
\begin{align*}
	\MatTwo{\mathbf{K}_x + \alpha \mathbf{M}_x } {\beta_1 \mathbf M_x}{\beta_2 \mathbf M_x }{ \mathbf{K}_x + \alpha \mathbf{M}_x}.
\end{align*}
The structure of the matrix is very similar to $\overline{A}$ in Theorem~\ref{thm:ST:PrecondDiag} up to the non-symmetry, which origins just from the different scalings $\beta_1$ and $\beta_2$ and their different sign. By a proper rescaling, we can transform this linear system into an equivalent 
system with a symmetric, but indefinite system matrix:
\begin{align*}
\MatTwo{\mathbf{K}_x + \alpha \mathbf{M}_x } {\beta_1 \mathbf M_x}{\beta_2 \mathbf M_x }{ \mathbf{K}_x + \alpha \mathbf{M}_x}\VecTwo{x}{y} &= \VecTwo{f}{g}\\
	\Longleftrightarrow \MatTwo{\mathbf{K}_x + \alpha \mathbf{M}_x } {-\beta_1 \mathbf M_x}{\beta_2 \mathbf M_x }{ -(\mathbf{K}_x + \alpha \mathbf{M}_x)} \VecTwo{x}{-y} &= \VecTwo{f}{g}\\
	\Longleftrightarrow \underbrace{\MatTwo{|\beta_2|(\mathbf{K}_x + \alpha \mathbf{M}_x) } {-\beta_1|\beta_2| \mathbf M_x}{|\beta_1|\beta_2 \mathbf M_x }{ -|\beta_1|(\mathbf{K}_x + \alpha \mathbf{M}_x)}}_{=:\overline{A}}\VecTwo{x}{-y} &= \VecTwo{|\beta_2|f}{|\beta_1|g},
\end{align*}
We note that $\beta_1$ and $\beta_2$ have different signs. 
Hence, $-\beta_1|\beta_2| = -\beta_2|\beta_1|$. 
Motivated by the construction of the preconditioner in the case of the eigenvalue decomposition, we can come up with an optimal preconditioner. The following theorem presents this optimal preconditioner for the matrix $\overline{A}$. 

\begin{theorem}
\label{thm:ST:PrecondRealSchur}
	Let  $K_x$ and $M_x$ be symmetric and positive matrices,
	and let $\alpha,\beta_1,\beta_2$ be real numbers  with $\alpha >0$. 
	Furthermore, we define the block matrices 
	\begin{align*}
		\overline{A} &:= \MatTwo{|\beta_2|(\mathbf{K}_x + \alpha \mathbf{M}_x) } {-\beta_1|\beta_2| \mathbf M_x}{|\beta_1|\beta_2 \mathbf M_x }{ -|\beta_1|(\mathbf{K}_x + \alpha \mathbf{M}_x)}, \\
					P &:= 
				 \MatTwo{|\beta_2|(\mathbf{K}_x + (\alpha+\sqrt{|\beta_1\beta_2|}) \mathbf{M}_x) }{0}{0}{|\beta_1|(\mathbf{K}_x +  (\alpha+\sqrt{|\beta_1\beta_2|}) \mathbf{M}_x)}.
	\end{align*}
	Then 
	the condition number estimate
	\begin{align*}
		\text{cond}(P^{-1}\overline{A}) \leq \sqrt{2}.
	\end{align*}
	holds.
\end{theorem}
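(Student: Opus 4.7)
\emph{Proof proposal.} The plan is to first reduce the non-symmetric system to the symmetric setting of Theorem~\ref{thm:ST:PrecondDiag} through a diagonal congruence, and then to replace the interpolation-based bound by the sharper direct eigenvalue analysis advertised in Remark~\ref{rem:cond_complexEV}, thereby improving the constant from $\sqrt 2(\sqrt 5+1)/(\sqrt 5-1)$ down to $\sqrt 2$.

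\emph{Step 1 (diagonal rescaling).} I would introduce $D := \text{diag}(\sqrt{|\beta_2|}\,\mathbf I,\sqrt{|\beta_1|}\,\mathbf I)$ and note that the condition number is invariant under the simultaneous congruence $\overline A \mapsto D^{-1}\overline A D^{-1}$, $P\mapsto D^{-1} P D^{-1}$. Since the eigenvalue corresponding to the $2\times 2$ block is genuinely complex, $\beta_1$ and $\beta_2$ have opposite signs, so the two off-diagonal blocks of $\overline A$ coincide: $-\beta_1|\beta_2| = |\beta_1|\beta_2 = \sigma|\beta_1\beta_2|$ with a common sign $\sigma\in\{\pm 1\}$. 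Writing $\gamma:=\sqrt{|\beta_1\beta_2|}$, $\mathcal K:=\mathbf K_x+\alpha\mathbf M_x$ and $\mathcal K_\gamma:=\mathcal K+\gamma\mathbf M_x$, a direct calculation then gives
\begin{align*}
D^{-1}\overline A D^{-1} = \MatTwo{\mathcal K}{\sigma\gamma\mathbf M_x}{\sigma\gamma\mathbf M_x}{-\mathcal K},
\qquad
D^{-1} P D^{-1} = \MatTwo{\mathcal K_\gamma}{0}{0}{\mathcal K_\gamma}.
\end{align*}
Absorbing the sign $\sigma$ into the second block component reduces the problem to the situation of Theorem~\ref{thm:ST:PrecondDiag}, but with $\beta$ replaced by the single positive scalar $\gamma$.

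\emph{Step 2 (sharp eigenvalue bound).} Following the route suggested in Remark~\ref{rem:cond_complexEV}, I would then analyze the symmetric generalized eigenvalue problem for the rescaled pair directly rather than through operator interpolation. Testing the two block equations with $x^\top$ and $y^\top$, then subtracting and adding respectively, produces one identity expressing $\lambda$ as a Rayleigh-type quotient in $\mathcal K_\gamma$ and a second identity expressing the off-diagonal form $\gamma x^\top\mathbf M_x y$ in terms of $\lambda$ and diagonal quadratic forms. Inserting the second into the Cauchy--Schwarz inequality $(x^\top\mathbf M_x y)^2\le (x^\top\mathbf M_x x)(y^\top\mathbf M_x y)$ and exploiting $\mathcal K_\gamma=\mathcal K+\gamma\mathbf M_x$ yields a quadratic inequality in $\lambda$ that, as in Remark~9 of \cite{HLN:Zulehner:2011a}, forces $1/\sqrt 2\le |\lambda|\le 1$; the claimed bound $\text{cond}(P^{-1}\overline A)\le\sqrt 2$ then follows by the congruence invariance from Step~1.

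\emph{Main obstacle.} Step~1 is essentially bookkeeping; the nontrivial work is the quadratic-inequality argument in Step~2, specifically the sharp upper bound $|\lambda|\le 1$. A crude use of $\mathbf M_x\le\gamma^{-1}\mathcal K_\gamma$ and $\mathcal K\le\mathcal K_\gamma$ would only reproduce the interpolation constant of Theorem~\ref{thm:ST:PrecondDiag}; one must instead combine Cauchy--Schwarz with the precise splitting $\mathcal K_\gamma=\mathcal K+\gamma\mathbf M_x$ to extract the $\sqrt 2$ improvement. Adapting the computation from Remark~9 of \cite{HLN:Zulehner:2011a} to the present mixed-sign block structure is, in my view, where the real effort of the proof lies.
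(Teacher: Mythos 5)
Your Step~1 is sound---the diagonal congruence $u\mapsto D^{-1}u$ with $D=\mathrm{diag}(\sqrt{|\beta_2|}\,\mathbf I,\sqrt{|\beta_1|}\,\mathbf I)$ leaves the generalized spectrum invariant and reduces the problem to the symmetric pair $\overline A_0=\bigl[\begin{smallmatrix}\mathcal K & \sigma\gamma\mathbf M_x\\ \sigma\gamma\mathbf M_x & -\mathcal K\end{smallmatrix}\bigr]$, $P_0=\mathrm{diag}(\mathcal K_\gamma,\mathcal K_\gamma)$ with $\gamma=\sqrt{|\beta_1\beta_2|}$ and $\mathcal K_\gamma=\mathcal K+\gamma\mathbf M_x$, which is indeed what the paper implicitly works with.

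However, Step~2 is a genuine gap, and the Cauchy--Schwarz route you sketch is not the one the paper (or Remark~9 of \cite{HLN:Zulehner:2011a}) actually follows, nor is it clear it closes. Writing $a=x^\top\mathcal K x$, $a'=y^\top\mathcal K y$, $A_1=x^\top\mathcal K_\gamma x$, $A_2=y^\top\mathcal K_\gamma y$, your ``subtracting'' identity reads $a+a'=\lambda(A_1-A_2)$, i.e.\ $\lambda=(a+a')/(A_1-A_2)$. The numerator is positive but the denominator has no fixed sign and can be arbitrarily close to zero, so this is not a Rayleigh quotient from which $|\lambda|\le 1$ follows; inserting the ``adding'' identity for $\gamma x^\top\mathbf M_x y$ into $(x^\top\mathbf M_x y)^2\le (x^\top\mathbf M_x x)(y^\top\mathbf M_x y)$ couples two a priori unrelated Rayleigh quotients $a/A_1$ and $a'/A_2$ and does not by itself force the sharp bound. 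What the paper instead uses is the simultaneous diagonalization of the symmetric pencil $(\mathcal K,\mathcal K_\gamma)$: choosing a $\mathcal K_\gamma$-orthonormal eigenbasis $\{e_j\}$ with $\mathcal K e_j=\mu_j\mathcal K_\gamma e_j$, $\mu_j\in[0,1]$, one also has $\gamma\mathbf M_x e_j=(1-\mu_j)\mathcal K_\gamma e_j$; expanding \emph{both} $x$ and $y$ in $\{e_j\}$ then decouples the full generalized eigenvalue problem into scalar $2\times 2$ problems, whose determinant gives exactly $|\lambda|=\sqrt{\mu_j^2+(1-\mu_j)^2}\in[1/\sqrt2,1]$. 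This componentwise decoupling is the crucial ingredient that a single global Cauchy--Schwarz testing cannot see, and it is precisely the step you identified as ``where the real effort lies''---but did not supply.
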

\begin{proof}
The proof follows the lines from Remark~9 in \cite{HLN:Zulehner:2011a}, which gives a sharper bound than using interpolation theory as in \cite{HLN:MonikaDiss}.
For notational simplicity, we introduce the abbreviations $\mathcal K := \mathbf{K}_x + \alpha \mathbf{M}_x$ and $\mathcal M := \mathbf M_x$. 
We now consider  the generalized eigenvalue problem $\overline{A} u = \lambda P u$, which reads
\begin{align}
\label{equ:GenEigenvalueProblem}
\MatTwo{ |\beta_2|\mathcal K } {-\beta_1|\beta_2| \mathcal M}{|\beta_1|\beta_2 \mathcal M }{ -|\beta_1|\mathcal K}\VecTwo{x}{y} = \lambda
			 \MatTwo{|\beta_2|(\mathcal{K} + \sqrt{|\beta_1\beta_2|} \mathcal{M}) }{0}{0}{|\beta_1|(\mathcal{K} + \sqrt{|\beta_1\beta_2|}\mathcal{M})}\VecTwo{x}{y}.
\end{align}
At first we consider the generalized eigenvalue problem
\begin{align*}
	\mathcal K z = \mu (\mathcal{K} + \sqrt{|\beta_1\beta_2|}\mathcal M)z.
\end{align*}
Due to the fact that $\mathcal K$ and $\mathcal M$ are symmetric, there exists an basis $\{e_1,e_2,\ldots,  e_{N_x}
\}$ of eigenvectors, which are orthonormal with respect to the inner product generated by $\mathcal{K} + \sqrt{|\beta_1\beta_2|}\mathcal M$, and 
corresponding eigenvalues  $\mu_j$. Since $\mathcal K$ is dominated by $\mathcal K + \sqrt{|\beta_1\beta_2|}\mathcal M$ and due to their positivity, we have that $\mu_j\in[0,1]$.
Therefore, we can express $x$ and $y$ as linear combination of $e_j$ with coefficients $\hat{x}_j$ and $\hat{y}_j$, respectively. Moreover, $\mathcal M z$ fulfils the following identity
\begin{align*}
	\mathcal Mz &= (|\beta_1\beta_2|)^{-1/2}(\sqrt{|\beta_1\beta_2|}\mathcal M + \mathcal K)z - (|\beta_1\beta_2|)^{-1/2} \mathcal K z\\
		    &= (|\beta_1\beta_2|)^{-1/2}(\sqrt{|\beta_1\beta_2|}\mathcal M + \mathcal K)z - (|\beta_1\beta_2|)^{-1/2} \mu (\mathcal{K} + \sqrt{|\beta_1\beta_2|}\mathcal M)z\\
		    &= (|\beta_1\beta_2|)^{-1/2}(1-\mu)(\sqrt{|\beta_1\beta_2|}\mathcal M + \mathcal K)z.
\end{align*}
Using the expansion of $x$ and $y$ into the eigenvectors $\{e_j\}$, 
system \eqref{equ:GenEigenvalueProblem} decomposes into the $2\times 2$ systems
\begin{align*}
\MatTwo{|\beta_2|\mu_j}{-\beta_1|\beta_2||\beta_1\beta_2|^{-1/2}(1-\mu_j)}{ |\beta_1|\beta_2|\beta_1\beta_2|^{-1/2}(1-\mu_j)}{-|\beta_1|\mu_j}\VecTwo{\hat{x}_j}{\hat{y}_j} = \lambda \MatTwo{|\beta_2|}{0}{0}{|\beta_1|}\VecTwo{\hat{x}_j}{\hat{y}_j}.
\end{align*}
Since there exists at least one pair $(\hat{x}_j,\hat{y}_j)$ which is non-zero, 
the determinant of the system matrix must be zero, i.e.,
\begin{align*}
\text{det}\left(\MatTwo{|\beta_2|\mu_j}{-\beta_1|\beta_2||\beta_1\beta_2|^{-1/2}(1-\mu_j)}{ |\beta_1|\beta_2|\beta_1\beta_2|^{-1/2}(1-\mu_j)}{-|\beta_1|\mu_j}- \lambda \MatTwo{|\beta_2|}{0}{0}{|\beta_1|}\right)=0,
\end{align*}
which reduces to
\begin{align*}
	|\beta_1\beta_2|(\lambda^2 - \mu_j^2) - (\beta_1\beta_2)^2 |\beta_1\beta_2|^{-1}(1-\mu_j)^2 = 0,
\end{align*}
where we used that $-\beta_1|\beta_2| = |\beta_1|\beta_2\neq0$. We immediately obtain that $|\lambda| = \sqrt{\mu_j^2 + (1-\mu_j)^2}$ for $\mu_i\in[0,1]$ and it follows that $\frac{1}{\sqrt2}\leq|\lambda|\leq 1$, which gives the desired bound on the condition number 
of $P^{-1}\overline{A}$.
 	$\hfill \square$
\end{proof}
Now we can again use the MinRes preconditioned by $P$ as iterative solver for systems
with the system matrix $\overline{A}$,
and we obtain a robust method. Moreover, due to the use of real arithmetic, this approach is usually more efficient than that one using the Complex Schur decomposition.

\section{Numerical examples}
\label{sec:numerics}

In this section, we test the proposed preconditioners on the three (2+1) dimensional space-time
cylinder $Q$
illustrated in Figure~\ref{fig:SpaceTimeYeti}. 
The two dimensional spatial domain $\Omega$ consists of 21 spatial subdomains 
(volumetric patches). 
For each time slap, we use conforming B-Splines of degree $p$. 
The problems were calculated on a 
Desktop PC with an Intel(R) Xeon(R) CPU E5-1650 v2 @ 3.50GHz and 16 GB main memory. 
We use the C++ library G+Smo for describing the geometry and performing the numerical tests,  see also \cite{HLN:JuettlerLangerMantzaflarisMooreZulehner:2014a} and \cite{gismoweb}.
\begin{figure}
	\includegraphics[width=0.3\textwidth]{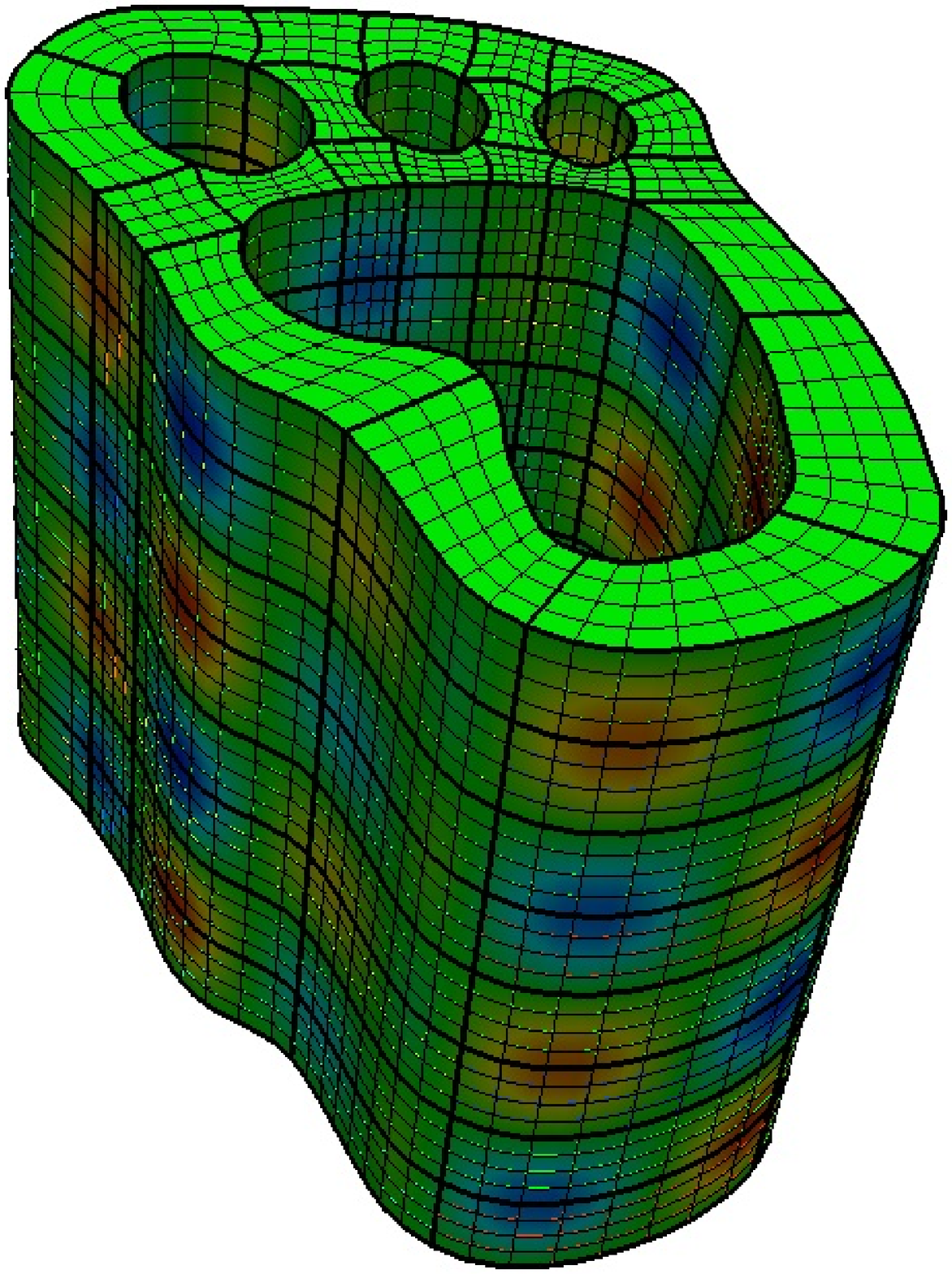}   
	\hspace{10ex}
	\includegraphics[width=0.25\textwidth]{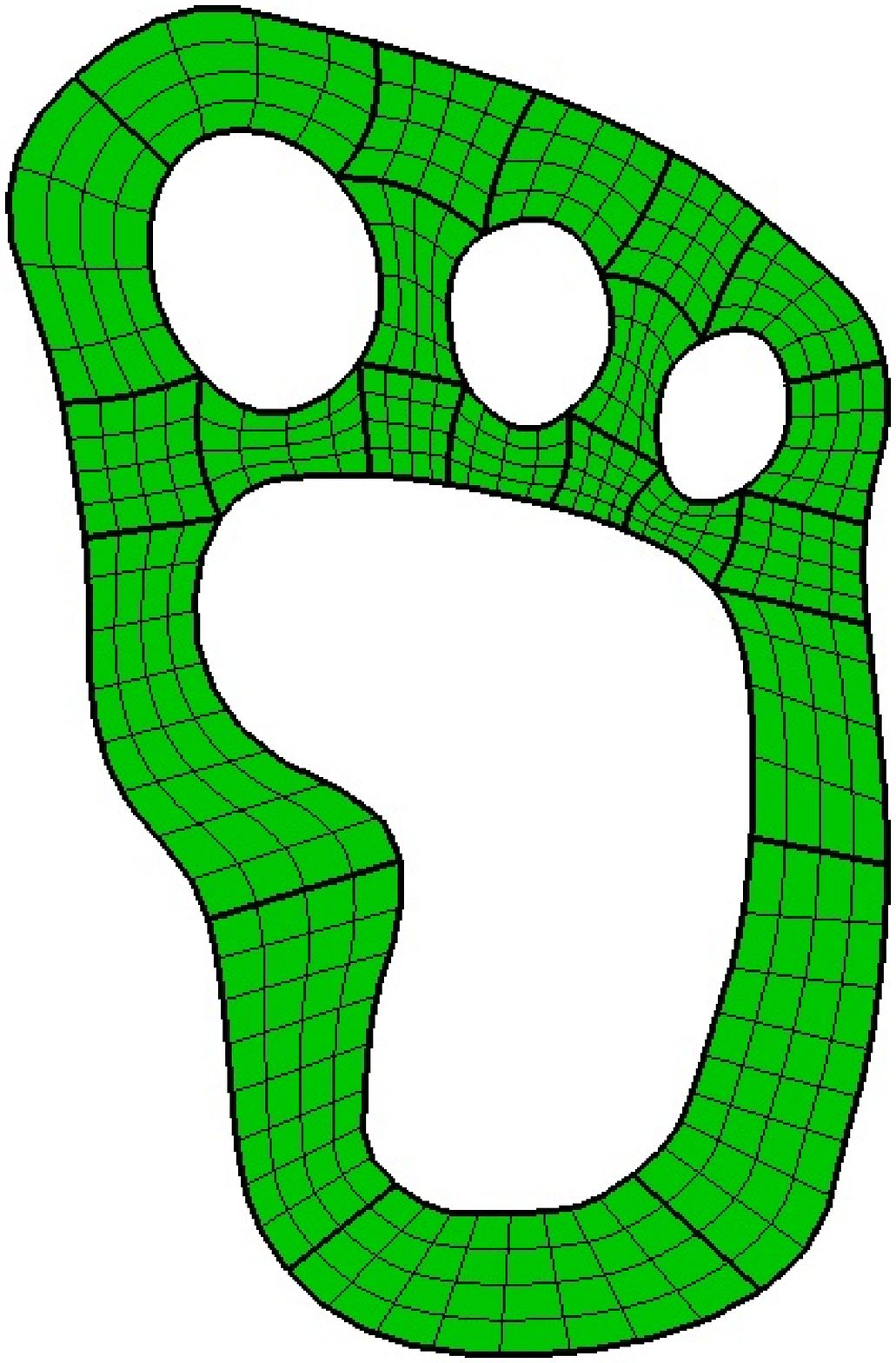}
	\begin{normalsize}
	\caption{
	The left picture shows the space-time cylinder $Q$ with 8 time slabs, while the right picture presents the spatial domain $\Omega$ consisting of 21 patches.}
	    \label{fig:SpaceTimeYeti}
	    \end{normalsize}
\end{figure}

\subsection{Condition number of eigenvector matrix $X$}
\label{sec:ST_NumTests_X}
Here, we study the condition number of the generalized eigenvectors of $(\mathbf K_t,\mathbf M_t)$. Due to the non-symmetry of $K_t$ and $M_t$, we do not obtain an orthogonal basis of eigenvectors. Hence, the condition number is not $1$. 
Actually, it can be quite large. 
We report on the condition number for different $p$ and 
$N_t$
in Table~\ref{tab:condX}. We observe that the condition number grows exponentially with $p$ and $N_t$.  We conclude that for small $p$ or small number of dofs in time direction, the approach presented in Section~\ref{sec:diagonalization} may be still feasible.

	\begin{table}
	 \centering
	   	  \begin{normalsize}
\begin{tabular}{|r||r|r|r|r|r|r|r|r|}\hline
$N_t-p$ \,\textbackslash\, $p$   &    $2$      &    $3$   &    $4$   &    $5$   	&    $6$   &    $7$   &    $8$  \\ \hline \hline
2       &    64   	& 309  	     & 362   		& 766   & 1706 	 & 3907  & 9501    \\ \hline
4       &    481   	& 1036   	& 3037    	& 9419  & 41959  & 39323  & 73946       \\ \hline
8      &    2869    	& 16118   	& 39693    	& 74370 & 180054 & 472758 & 1e+06 \\ \hline
16      &    34332    	& 188263     & 463148     	& 1e+06	& 6e+06	& 3e+07 & 1e+08 \\ \hline
32      &    701306     & 2e+06		& 1e+07		& 6e+07	& 4e+08 & 7e+09 & 1e+10 \\ \hline
64      &    5e+07	& 4e+07		& 3e+08		& 3e+09	& 6e+10	& 3e+11 & 1e+12 \\ \hline
128     &    2e+08	& 1e+09		& 1e+10		& 3e+11	& 2e+13	& 5e+13 & 4e+14  \\ \hline
\end{tabular}

   \caption{Condition number of $\mathbf X$ for $\theta = 0.01$ and $|t_{n+1}-t_n|=0.1$.}
   \label{tab:condX}
             \end{normalsize}
   \end{table}

\subsection{Smallest eigenvalue of $\mathbf M_t^{-1} \mathbf K_t$}
\label{sec:smallEVMtKt}
In Section~\ref{sec:constructionAinv}, we observed the necessity that  the real part of the smallest eigenvalue of  $\mathbf M_t^{-1} \mathbf K_t$ is positive. In this section, we present numerical studies for different $p$, $h$ and $\theta$, where we fix the time interval to $[0,1]$. The results are summarized in Table~\ref{tab:minEV}, where the entries with $*$ indicate that the matrix $M_t$ had at least one eigenvalue with negative real part. Consequently, the smallest real part of the generalized eigenvalues was also  negative. We observe that, if $\mathbf M_t>0$, then also the real part of  $\mathbf M_t^{-1} \mathbf K_t$ is positive. The positive real part of the eigenvalues for the $p=1$ and $\theta=0$ is in agreement with Remark~\ref{rem:EV_realPart_2}. Moreover, for $\theta=0$ and increasing $p$ we observe even an increase of the smallest real part of the eigenvalues, cf. Proposition~\ref{prop:theta0} and Remark~\ref{rem:EV_realPart_2}. 
The numerical tests indicate that, for sufficiently small $\theta$, the smallest real part of the generalized eigenvalues stays positive.

\begin{table}
	 \centering
	   	  \begin{normalsize}
	\begin{tabular}{|r||r|r|r|r|r|r|r||r|r|r|r|r|r|r|}\hline
	&\multicolumn{7}{c||}{2 uniform refinements}  &    \multicolumn{7}{c|}{4 uniform refinements}    \\\hline
$\theta \textbackslash p  $   & 1 &   2      &   3  &    4  &    5   	&   6  &   7  & 1  &   2      &   3  &    4  &    5   	&   6  &   7 \\ \hline
0       & 1.5 &    2.4   & 3.2     & 3.8   & 4.3  &  4.7   & 5.0            & 0.2   & 0.5   & 0.9     & 1.5   & 2.1      &  2.7      & 3.4         \\ 
0.01    & 1.6 &    2.5   & 3.2     & 3.6   & 4.0  &  4.4   & 4.9            & 0.7   & 0.7   & 1.1     & 1.6   & 2.2      &  2.8      & 3.3         \\ 
0.1     & 2.5 &    2.9   & 3.2     & 3.6   & 4.0  &  4.5   & 5.2            & 4.8   & 2.9   & 2.7     & 3.0   & 3.4      &  3.6      & 4.1        \\ 
1       & 4.1 &    4.5   & 4.7     & *     & *     &  *      & *            & 12.4   & 12.0   & 9.2     &  *    & *         &  *       &  *          \\
10      & 4.6 &    5.2   & 5.2     & *     & *     &  *      & *            & 6.7    & 11.8   & *       &  *    & *         &  *       & *          \\ \hline \hline
	&\multicolumn{7}{c||}{6 uniform refinements}  &    \multicolumn{7}{c|}{8 uniform refinements}    \\\hline  
	$\theta \textbackslash p  $   & 1 &   2      &   3  &    4  &    5   	&   6  &   7  & 1  &   2      &   3  &    4  &    5   	&   6  &   7 \\ \hline
0       & 0.01 &   0.03   & 0.06    & 0.1   & 0.1      &  0.2      & 0.2   &  0.0008 & 0.002  & 0.004   & 0.006  &  0.009    &  0.01      & 0.02     \\ 
0.01    & 1.9 &   1.0   & 0.8     & 0.7   & 0.6      &  0.6      & 0.6    &   7.7   &4.0    & 3.0     & 2.5    &  2.0      &  1.8       & 1.6     \\  
0.1     & 18.6 &   9.9   & 7.4     & 6.0   & 5.1      &  4.5      & 4.0    &  34.8 & 33.8   & 29.5    & 23.8   &  20.0     &  17.2      & 15.1    \\   
1       & 34.2 &   35.1   & 33.8    &  *    & *         &  *       &  *    &  34.8 & 34.4   & 34.5    &  *     & *         &  *       &  *      \\ 
10      & 11.4 &   17.4   & *       &  *    & *         &  *       & *     &  29.0 & 32.2   & *       &  *     & *         &  *       & *     \\ \hline   
\end{tabular}
  \caption{Smallest real part of generalized eigenvalues $\mathbf K_tx=\lambda \mathbf M_t x$ for different B-Spline degrees $p$, $\theta$ and number of dofs. The $*$ indicates that the matrix $\mathbf M_t$ has at least one eigenvalue with negative real part.}
     \label{tab:minEV}
          \end{normalsize}
\end{table}

\subsection{Condition number of preconditioned $\mathbf K_x +\lambda \mathbf M_x$}

The aim of this section is to verify the optimal condition number bound 
presented
in Theorem~\ref{thm:ST:PrecondDiag} and Theorem~\ref{thm:ST:PrecondRealSchur}.
To do so, we report on the maximum number of MinRes-iterations in order to solve $\mathbf K_x +\lambda_i \mathbf M_x$, where $\lambda_i\in\mathbb{C}$ are the generalized eigenvalues of $(\mathbf K_t,\mathbf M_t)$. 
We use zero initial guess, and a reduction of the initial residual by $10^{-10}$.
We choose $\theta=0.1$. 
In Table~\ref{tab:precKxMxComplex}, we investigate the robustness of the preconditioners from Theorem~\ref{thm:ST:PrecondDiag} and Theorem~\ref{thm:ST:PrecondRealSchur}. We observe that the number of iterations stays bounded for various $p$ and $h$.

       \begin{table}
       	 \centering
	   	  \begin{normalsize}
    \begin{tabular}{|c||P{0.7cm}|P{0.7cm}|P{0.7cm}|P{0.7cm}|P{0.7cm}||P{0.7cm}|P{0.7cm}|P{0.7cm}|P{0.7cm}|P{0.7cm}|}\hline
    & \multicolumn{5}{c||}{Complex Schur decomp.} &     \multicolumn{5}{c|}{Real Schur decomp.}  \\\hline
    	ref. $x$ and $t$\,\textbackslash\,$p$        &	$2$	&	$3$	&	$4$	& 	$5$	& $6$           &	$2$	&	$3$	&	$4$	& 	$5$	& $6$    \\\hline
    	0	   &      23      &     22          &      26         &    26           &   26    &      18      &     18          &      20         &    21           &   22  \\
    	1          &      25      &     24          &      24         &    27           &   26    &      20      &     20          &      22         &    22           &   22   \\
    	2          &      25      &     25          &      25         &    27           &   27    &      22      &     22          &      22         &    22           &   22  \\
    	3          &      24      &     26          &      26         &    27           &   27    &      22      &     22          &      22         &    22           &   21    \\
    	4          &      25      &     25          &      26         &    27           &   26    &      22      &     22          &      22         &    22           &   20    \\ \hline
    \end{tabular}
    \caption{Maximum number of MinRes iterations to solve $\mathbf K_x +\lambda_i \mathbf M_x$, $i=1,\ldots,N_t$, resulting from the Complex and Real Schur decomposition. Refinement is performed uniformly in $x$ and $t$.}
        \label{tab:precKxMxComplex}
              \end{normalsize}
    \end{table}

 \subsection{Application to Space-Time Multigrid}
This section deals with the use of the iterative methods developed 
in Section~\ref{sec:constructionAinv}
as smoothers in the space-time multigrid.
 The realization of the preconditioner $P$, see Theorem~\ref{thm:ST:PrecondDiag} and Theorem~\ref{thm:ST:PrecondRealSchur} is performed via a sparse direct solver.  
 We 
 use
 the PARDISO 5.0.0 Solver Project \cite{HLN:PARDISO500} for performing the LU factorizations. 
 We compare the three different approaches, presented in Section~\ref{sec:constructionAinv}, 
 with 
 the exact realization of 
 $\mathbf A_n^{-1}$ via 
 the sparse direct solver PARDISO.
 For approximating $\mathbf A_n^{-1}$ via MinRes, we use zero initial guess and a reduction of the initial residuum by $10^{-4}$. In Table~\ref{tab:ST_MG}, we report on the single core computation time of the MG algorithm to setup the data-structures and solve the system via the 
 MG iteration. The setup time includes the LU factorizations, but not the assembling of the matrices. For the MG iteration, we use zero initial guess and a reduction of the initial residuum by $10^{-8}$. We choose $\theta=0.01$, $|t_n-t_{n+1}|=0.1$ and the polynomial degree by $p=3$ for both space and time direction. Moreover, we fix the number of dofs in time direction of a time slab, but increase the number of time slabs. The MG method uses coarsening in space as well as in time. 
 
 We observe that the LU factorization of $\mathbf A_n$ needs a quite large amount of time, 
 whereas 
 the setup time is almost negligible 
 for the three preconditioners proposed. 
 The little increase in the solution time definitely pays off by the small setup time. In addition, the Real-Schur decomposition almost provides the same solution time as the direct solver. Due to the complex arithmetic of the Diagonalization or the Complex-Schur decomposition, their computational effort doubles, which we observe also in the numerical test. Finally, due to the quite accurate approximation of $\mathbf{A}_n^{-1}$ (up to $10^{-4}$), 
 we do not observe a deterioration of the MG iteration numbers. 
 It took around 12 iterations to reach the desired tolerance of $10^{-4}$.
 
 \begin{table}
 	\centering
 	\begin{normalsize}
 		\begin{tabular}{|r|cc|r|r|rr|rr|}     \hline
       $\#$dofs & \multicolumn{2}{c|}{ref} &  $\#$slaps     & MG-It  & \multicolumn{2}{c|}{Direct} &\multicolumn{2}{c|}{Diag}     \\
 	         & x        & t            &                &        &   Setup        & Solving    &    Setup     & Solving        \\   \hline
	 15950  &  2        & 3            &      2         &    7   &    1.9         &    0.7     &  0.04        &    2.3         \\
	97020   &  3        & 3            &      4         &    7   &    38.6        &    8.5     &  0.3         &   19.4         \\
        665720  &  4        & 3            &      8         &    7   &    1008        &   94.6     &  3.7         &  183.8         \\ \hline
       $\#$dofs & \multicolumn{2}{c|}{ref} &  $\#$slaps     & MG-It  & \multicolumn{2}{c|}{C-Schur} &\multicolumn{2}{c|}{R-Schur}  \\   \hline
	 15950  &  2        & 3            &      2         &    7   &    0.05        &     2.4   &   0.04       &   1.3           \\
	97020   &  3        & 3            &      4         &    7   &    0.5         &    19.9   &   0.3        &  11.1           \\
        665720  &  4        & 3            &      8         &    7   &   5.4          &   187.3   &   3.7        & 108.0           \\	 \hline
\end{tabular}
\caption{Comparison of the 
Diagonalization as well as the
Complex Schur and Real Schur decompositions with a sparse direct solver used for approximating $\mathbf A_n^{-1}$. All timings are given in seconds.}
\label{tab:ST_MG}
 	\end{normalsize}
 \end{table}

 \section{Conclusions}
 \label{sec:Conclusions}
 
 In this work, we presented a decomposition of a non-symmetric linear system arising from a space-time formulation into a series of symmetric linear systems, which are easier to solve. These problems are part of the time-parallel MG method introduced in \cite{HLN:Neumueller:2013a}. 
 They correspond to spatial problems. 
 They are either symmetric and positive definite or have a symmetric saddle point structure. 
 For the latter, we presented robust preconditioners motivated by operator interpolation theory. 
 The runtime performance is already very promising, even when using direct solvers,  
 and can further be reduced by using robust IgA multigrid or IgA domain decomposition approaches 
 as proposed, e.g., in 
 \cite{HLN:HoferLanger:2017a,HLN:Hofer:2018a,HLN:HoferLanger:2019b} 
 or
 \cite{HLN:HofreitherTakacsZulehner:2017a,HLN:HofreitherTakacs:2017a},
 respectively. 
 The advantage of the  decompositions proposed 
 consists in the availability of
 well-established preconditioners for symmetric and positive definite problems.

\section*{Acknowledgements}

This work was supported by the Austrian Science Fund (FWF) under the grant W1214, project DK4. This support is gratefully acknowledged.

\bibliographystyle{abbrv}

\bibliography{SpaceTime.bib}

\end{document}